\documentclass[hidelinks,onefignum,onetabnum]{siamart251216}

\usepackage{lipsum}
\usepackage{amsfonts}
\usepackage{graphicx}
\usepackage{epstopdf}
\ifpdf
  \DeclareGraphicsExtensions{.eps,.pdf,.png,.jpg}
\else
  \DeclareGraphicsExtensions{.eps}
\fi

\newsiamremark{remark}{Remark}
\newsiamthm{claim}{Claim}

\headers{HDSA with respect to model discrepancy: Sequential OED}{M. Madhavan, J. Hart, and B. van Bloemen Waanders}

\title{Hyper-differential sensitivity analysis with respect to model discrepancy: Sequential optimal experimental design 
}
\author{
Madhusudan Madhavan\thanks{Sandia National Laboratories, mmadhav@sandia.gov} \and 
Joseph Hart\thanks{Sandia National Laboratories, joshart@sandia.gov} \and 
Bart van Bloemen Waanders\thanks{Sandia National Laboratories, bartv@sandia.gov} 
}

\usepackage{amsopn}

\usepackage{lipsum}
\usepackage{epstopdf}
\usepackage{algpseudocode}
\ifpdf
  \DeclareGraphicsExtensions{.eps,.pdf,.png,.jpg}
\else
  \DeclareGraphicsExtensions{.eps}
\fi
\usepackage{float}
\usepackage{booktabs} 
\usepackage{bm}
\usepackage{setspace}
\usepackage{enumitem}
\usepackage{graphicx}
\usepackage[mathscr]{euscript}
\usepackage{cite}
\usepackage{mathtools}
\usepackage{amsopn}
\usepackage{multicol}
\usepackage{xcolor}
\usepackage{array}

\renewcommand{\figurename}{Figure}

\newcommand{\boldheading}[1]{~\\\indent\textit{{#1}.}}

\DeclareMathOperator{\tr}{Tr}

\newcolumntype{P}[1]{>{\centering\arraybackslash}p{#1}}
\newcolumntype{M}[1]{>{\centering\arraybackslash}m{#1}}

\renewcommand{\vec}[1] {\ensuremath{\boldsymbol{#1}}}
\newcommand{\mat}[1]{\vec{#1}}
\newcommand{\mc}[1]{\mathcal{#1}}
\def\R{{\mathbb R}}
\def\U{{\mathcal U}}
\def\Z{{\mathcal Z}}
\def\t{{\vec{\theta}}}
\def\u{{\vec{u}}}

\def\z{{\vec{z}}}
\def\zbar{{\bar{\z}}}
\def\ztilde{{\tilde{\z}}}
\def\x{{\vec{x}}}
\def\e{{\vec{e}}}
\def\v{{\vec{v}}}

\def\y{{\vec{y}}}

\def\b{{\vec{b}}}

\def\d{{\delta}}
\def\A{{\vec{A}}}
\def\B{{\vec{B}}}

\def\M{{\vec{M}}}

\def\G{{\vec{G}}}

\def\L{{\vec{L}}}
\def\C{{\vec{C}}}

\def\H{{\vec{H}}}
\def\P{{\vec{P}}}

\def\V{{\vec{V}}}

\def\I{{\vec{I}}}
\def\g{{\vec{g}}}
\def\W{{\vec{W}}}

\def\F{{\vec{F}}}
\DeclareMathAlphabet\mathbfcal{OMS}{cmsy}{b}{n}

\graphicspath{{./figures/}}
\newcommand{\Exp}{\mathbb{E}}
\newcommand{\diff}[2]{\frac{\partial #1}{\partial #2}}
\newcommand{\difft}[2]{\frac{\partial^2 #1}{\partial #2^2}}
\renewcommand{\tt}{\tilde{\t}}
\newcommand{\St}{\tilde{S}}
\newcommand{\tbar}{\bar{\t}}
\newcommand{\zt}{\tilde{\z}}
\renewcommand{\b}{\vec{\beta}}
\newcommand{\aoed}{\alpha_{k}}

\begin{document}

\maketitle

\begin{abstract}
Large-scale optimization problems are ubiquitous in the physical sciences; yet, high-fidelity models can often be complex and computationally prohibitive for optimization. A practical alternative is to use a low-fidelity model to facilitate optimization. However, the discrepancy between the high- and low-fidelity models can lead to suboptimal solutions. To address this, we build on recent work in Hyper-Differential Sensitivity Analysis \cite{HDSA_MD_3} to leverage limited high-fidelity simulations to update the optimization solution. Our contributions in this article include: (i) incorporating pseudo-time continuation techniques to efficiently compute higher-accuracy optimal solution updates, and (ii) proposing a Bayesian framework for sequential data acquisition that strategically guides high-fidelity evaluations and reduces uncertainty in the model discrepancy estimation. Numerical results demonstrate that our framework delivers significant improvements to optimization solutions with only a few high-fidelity evaluations. 
\end{abstract}

\begin{keywords}
optimal experimental design, model calibration, multi-fidelity methods, post-optimal sensitivity analysis, optimization.
\end{keywords}

\begin{MSCcodes}
  49M41, 
  90C31, 
  62L05, 
  35Q93, 
  62K05 
\end{MSCcodes}

\section{Introduction}
\label{sec:intro}

Optimization of large-scale problems is crucial to numerous domains in science and engineering, where accurate modeling and efficient decision-making are paramount. High-fidelity computational models play a critical role in these efforts, enabling detailed representations of complex systems by capturing intricate physical phenomena and multi-scale interactions. However, their computational demands often render them impractical for direct use in optimization tasks, due to the necessity of repeated evaluations and derivative information. Simpler models are therefore constructed through omission of physical processes, projection into lower dimensions, or data-driven model approximations. We will refer to these as low-fidelity models. An optimal solution obtained using a low-fidelity model can however be grossly suboptimal for the high-fidelity model due to model discrepancy---the difference between the high- and low-fidelity models. 

Hyper-differential sensitivity analysis with respect to model discrepancy (HDSA-MD), introduced in \cite{HDSA_MD_2,HDSA_MD_3}, tackles this challenge by first estimating the model discrepancy using a few high-fidelity evaluations and then approximating the perturbed optimal solution via post-optimality sensitivity analysis. This approach utilizes a Bayesian framework to quantify uncertainty and assess solution quality. However, there are a few remaining challenges present in this framework that need to be addressed, notably, the mitigation of linearization error stemming from post-optimality sensitivities and the efficient acquisition of high-fidelity evaluations. The present work builds on the HDSA-MD framework by addressing these limitations. Specifically, we utilize pseudo-time continuation methods \cite{Continuation1,Continuation2} to improve the quality of the learned optimal solution and develop a novel sequential uncertainty-driven approach for guiding high-fidelity model evaluations.

The HDSA-MD framework is related to work on multi-fidelity modeling and multi-fidelity optimization \cite{ben,multifidelity_quasinewton_bryson,agarwal_trust-region_2013,march_provably_2012,trmm_lewis}. Specifically, this framework shares the goal of using both low- and high-fidelity models to perform optimization and uncertainty quantification (UQ) more efficiently. Unlike traditional multi-fidelity optimization methods, HDSA-MD does not need access to derivatives of the high-fidelity model and allows for offline high-fidelity model evaluations. Our approach is also related to work on Bayesian calibration of model discrepancy, as pioneered by Kennedy and O'Hagan in the seminal work \cite{Ohagan2001} and subsequently advanced by recent works \cite{ohagan2014,Higdon_2008,Maupin,Arendt_2012,Ling_2014}. Although conceptually similar, HDSA-MD focuses on deterministic optimization problems and considers discrepancy as the difference between high- and low-fidelity models, rather than the traditional calibration context of model discrepancy studies. Moreover, we assume high-fidelity models can be evaluated at different user-specified optimization variable inputs.

At its core, HDSA-MD utilizes post-optimality sensitivity analysis \cite{shapiro_SIAM_review,HDSA} to approximate the mapping from the model discrepancy to the optimization solution. While previous works \cite{HDSA_MD_2,HDSA_MD_3} considered a linear approximation of this mapping, linearization may not be sufficient when the model discrepancy is large or when the mapping is highly nonlinear. To address limitations arising from linearization, we consider continuation and path-following methods \cite{allgower_georg,allgower_georg_siam,param_opt_book,Liu_2025,Si_2024}. Notably, the recent works \cite{Continuation1,Continuation2} introduce a computationally efficient pseudo-time continuation approach for parameterized PDE-constrained optimization problems. In the present work, we utilize this continuation approach to efficiently compute the model discrepancy to optimization solution mapping. 

Optimal experimental design (OED) has been well-studied in literature, with a rich body of theoretical and methodological work spanning several decades \cite{atkinson_optimum_1992,pukelsheim_optimal_2006}. In particular, the Bayesian approach to OED \cite{alexanderian_optimal_2021,huan_optimal_2024} specifies experimental settings in order to minimize uncertainty (or maximize expected information). We draw on this approach to guide data collection for HDSA-MD. Specifically, we adopt a goal-oriented OED approach \cite{Attia_2018,wu_offline-online_2023} to strategically guide the high-fidelity data acquisition. Note, however, that experimental data in the present context refers to high-fidelity evaluations and not necessarily physical measurements. We utilize a sequential approach to optimal data collection, drawing parallels with techniques for data acquisition in Bayesian Optimization (BO)\cite{wang_recent_2023,shahriari_taking_2016}. However, our approach is set in the context of a high-dimensional optimization problem where there is access to a low-fidelity model but limited access to the high-fidelity model, rendering it infeasible for traditional BO methods. Our approach also shares some similarities with literature on sequential optimal experimental design \cite{gautier_adaptive_2000,surer_sequential_2024,huan_numerical_2015,go_sequential_2025} yet differs considerably as our strategy involves acquiring data to update a low-fidelity optimization solution. 

The key contributions of this article are:
\begin{itemize}
  \item The application of pseudo-time continuation techniques to enable efficient and accurate post-optimal solution updates.
  \item The development of a sequential uncertainty-driven approach for optimal data acquisition to enable computationally efficient and effective model discrepancy calibration with limited evaluations.
  \item Illustrative numerical experiments that showcase the potential of the proposed approach.
\end{itemize}

The article is organized as follows. In \Cref{sec:background}, we review the HDSA-MD framework and highlight its limitations that we improve upon in this article. Our contributions are given in the subsequent three sections. First, the pseudo-time continuation method for optimal solution updates is presented in \Cref{sec:continuation}. Second, the proposed framework for optimal design of experiments in the context of model-discrepancy calibration is developed in \Cref{sec:OED}. Third, numerical tests are performed in \Cref{sec:results}, demonstrating the effectiveness of the approaches in the context of diffusion-reaction and fluid flow model problems. \Cref{sec:conclusion} presents concluding remarks and highlights potential directions for future work.

\section{Background}
\label{sec:background}
Consider an optimization problem of the form, 
\begin{equation}
  \min_{z \in \Z} J(S(z), z),
  \label{eq:hifi_optim_prob}
\end{equation} 
where $z$ belongs to a Hilbert space $\Z$. The map $S: \Z \to \U$ is the solution operator for a differential equation with state variable $u$ in an infinite-dimensional Hilbert space $\U$, and the objective function is $J: \U \times \Z \to \R$, which we seek to minimize. In many situations, solving \cref{eq:hifi_optim_prob} directly may be infeasible due to the complexity of the high-fidelity solution operator $S$. Therefore, we instead solve, 
\begin{equation}
  \min_{z \in \Z} J(\St(z), z),
  \label{eq:lofi_optim_prob}
\end{equation} 
where $\St: \Z \to \U$ is the solution operator corresponding to a low-fidelity model. The goal of HDSA-MD is to use a few high-fidelity model evaluations to update the minimizer of \cref{eq:lofi_optim_prob}. 

\boldheading{Notation}
The state space $\U$ is infinite-dimensional, and the optimization variable space $\Z$ may be finite- or infinite-dimensional. To discretize, we define the bases \(\{\eta_i\}_{i=1}^{n_u}\) spanning a subspace \(\U_h \subset \U\) and \(\{\varphi_j\}_{j=1}^{n_z}\) spanning a subspace \(Z_h \subset Z\). Note that if $\Z$ is finite-dimensional, we simply let $\Z_h = \Z$. This allows for the finite-dimensional coordinate representation of $u \in \U$ as $\u \in \R^{n_u}$ and $z \in \Z$ as $\z \in \R^{n_z}$, where the discretized dimensions $n_u$ and $n_z$ are typically large. We define the mass matrices, $\M_{\u} \in \R^{n_u \times n_u}$ and $\M_{\z} \in \R^{n_z \times n_z}$, whose $(i, j)$th elements are defined by the Hilbert space inner products, $\langle\eta_i, \eta_j\rangle$ and $\langle\varphi_i, \varphi_j\rangle$, respectively. We henceforth adopt this discretized finite-dimensional setting, with the objective function \(J : \mathbb{R}^{n_u} \times \mathbb{R}^{n_z} \to \mathbb{R}\) and solution operators \(S : \mathbb{R}^{n_z} \to \mathbb{R}^{n_u}\) and \(\tilde{S} : \mathbb{R}^{n_z} \to \mathbb{R}^{n_u}\). We use \(\nabla_\u\) and \(\nabla_\z\) to denote the gradient of a scalar-valued function (or Jacobian of a vector-valued function) with respect to \(\u\) and \(\z\), and \(\nabla_{\u,\z}\) and \(\nabla_{\z,\z}\) to denote second-order derivative matrices with respect to \((\u,\z)\) and \((\z,\z)\), respectively. 

In the following sections, we provide a brief overview of the HDSA-MD approach. Namely, the HDSA-MD framework is composed of two fundamental steps: (i) calibrating a representation of the discrepancy between the models, $S(\z) - \St(\z)$, and (ii) updating the optimization problem solution. 
To quantify the sensitivity of the optimal solution to model discrepancy, we employ a post-optimality sensitivity approach, as outlined in \Cref{ssec:optim_form}. This analysis allows for updating the optimizer upon calibration. This is followed by an overview of the model calibration process in \Cref{ssec:bayes_ip}. We finally discuss the limitations of the current HDSA-MD approach in \Cref{ssec:limitations}, which sets the stage for the developments in this article.

\subsection{Post-optimality sensitivity analysis}
\label{ssec:optim_form}
To incorporate model discrepancy into optimization, consider the parameterized optimization problem \cite{HDSA_MD_3},
\begin{align}
\label{eq:minJ_z_theta}
 \min_{\z \in \R^{n_z}} \hspace{1 mm} \mc{J}(\z,\t):=J(\tilde{S}(\z)+\d(\z,\t),\z) ,
\end{align}
where $\d:\R^{n_z} \times \R^{n_\theta} \to \R^{n_u}$ is a mapping that approximates the discrepancy between the high-fidelity and low-fidelity solution operators. This mapping, which we will define in~\Cref{ssec:bayes_ip}, is parameterized by a high-dimensional vector $\t \in \R^{n_\theta}$ and calibrated so that $\d(\z, \t) \approx S(\z) - \St(\z)$. Assume that $\d$ is parameterized such that $\d(\z, \vec{0}) = \vec{0}$ for all $\z$, that is, $\mc{J}(\z,\vec{0})$ is the low-fidelity optimization objective. To facilitate subsequent derivations, assume that $\mc{J}(\z,\t)$ is twice continuously differentiable with respect to $(\z,\t)$. Let $\zt \in \R^{n_z}$ denote a local minimizer of the low-fidelity problem, satisfying the first-order and second-order optimality conditions:
\begin{equation}
\nabla_\z \mathcal{J}(\zt,\vec{0}) = \vec{0},
\label{eq:optim_condition}
\end{equation}
and $\nabla_{\z, \z} \mathcal{J}(\zt,\vec{0})$ is positive-definite, respectively.
Applying the Implicit Function Theorem to \cref{eq:optim_condition} yields the existence of an operator $\F: \mc{N}(\vec{0}) \to \R^{n_z}$, defined on a neighborhood $\mc{N}(\vec{0})$ of $\vec{0} \in \R^{n_\theta}$, such that
\begin{align*}
\nabla_\z \mc{J}(\F(\t),\t) = \vec{0}.
\end{align*}
If the Hessian $\nabla_{\z, \z} \mc{J}(\F(\t),\t)$ is positive-definite for all $\t \in \mc{N}(\vec{0})$, then the operator $\F$ can be interpreted as a local mapping from a given calibration estimate $\t \in \mc{N}(\vec{0})$ to the solution of \cref{eq:minJ_z_theta}. Moreover, the Jacobian of $\F$ with respect to $\t$, evaluated at $\t=\vec{0}$, is given  by
\begin{eqnarray}
\label{eq:dis_sen_op}
\nabla_\t \F(\vec{0}) = - \H^{-1} \B \in \R^{n_z \times n_\theta},
\end{eqnarray}
where $\H =\nabla_{\z,\z} \mc{J}(\tilde{\z},\vec{0}) \in \R^{n_z \times n_z}$ is the Hessian of $\mc{J}$ and $\B = \nabla_{\z,\t} \mc{J}(\tilde{\z},\vec{0}) \in \R^{n_z \times n_\theta}$ is the Jacobian of $\nabla_\z \mc{J}$ with respect to $\t$, both evaluated at the low-fidelity optimizer \cite{HDSA, Griesse2006}. 
The post-optimality sensitivity operator, $\nabla_\t \F(\vec{0})$, represents the local sensitivity of the optimal solution with respect to the model discrepancy. 

\boldheading{Hessian Projection}
Our goal is to utilize the post-optimality sensitivity operator, $\nabla_\t F(\vec{0})=-\H^{-1} \B$, to update the optimal solution $\zt$. However, in many applications, the objective function only exhibits sensitivity in a low-dimensional subspace. In particular, observe that near the local optimizer $\zt$, the objective function has the greatest sensitivity in directions that align with the largest eigenvectors of the Hessian. Conversely, perturbations that occur in directions of the small eigenvalues of $\H$ have little meaningful impact on the optimization objective. However, these directions correspond to the largest eigenvalues of $\H^{-1}$, and hence direction of large post-optimality sensitivity. Thus, perturbations of the parameter $\t$ in directions in which $\B \t$ aligns with the small eigenvalues of $\H$ will yield large optimal solution updates but small changes in the objective function value. Furthermore, since $\t$ will be calibrated using limited data, we may have noisy estimates of $\t$ such that $\B \t$ aligns with the small eigenvalues, thus leading to high sensitivity to noise. 

To avoid this, we implicitly regularize the optimal solution update by projecting it onto a subspace of high objective function sensitivity. This also has the benefit of reducing computational complexity by reducing dimensionality of the optimization space. To define the projector, we compute the eigenvectors of $\H$ in the $\W_\z$ weighted inner product:
\begin{align}
\label{eqn:hess_gen_eig}
\H \vec{v}_j = \rho_j \W_\z \vec{v}_j, \qquad j=1,2,\dots,r,
\end{align}
where $r$ is the truncation rank (the eigenpairs are ordered with $\rho_1 \ge \rho_2 \ge \cdots$). The weighting matrix $\W_\z \in \R^{n_z \times n_z}$, which will be specified in~\Cref{ssec:bayes_ip}, imposes prior information arising from our Bayesian discrepancy calibration formulation. In practice, the eigenpairs are estimated with iterative or randomized methods \cite{Grimes1994,saibaba_gsvd}. 

Let $\vec{V} = \begin{pmatrix} \vec{v}_1 & \vec{v}_2 & \cdots \vec{v}_r \end{pmatrix} \in \R^{n_z \times r}$ be an orthogonal matrix in the $\W_\z$ weighted inner product and define the projector as $\P = \vec{V} \vec{V}^\top \W_\z$. We may thus approximate the optimal solution update using a linearization of the form, 
\begin{equation}
  \label{eq:lin_sol}
  \F(\t) \approx \F(\vec{0}) + \P \ \nabla_\t \F(\vec{0}) \t = \zt - \P \H^{-1} \B\t.
\end{equation}

\subsection{Bayesian model discrepancy calibration}
\label{ssec:bayes_ip}
\boldheading{Representation of model discrepancy}
We seek to calibrate $\t$ 
so that $\d(\z, \t) \approx S(\z) - \St(\z)$. Since the high-fidelity model can only be evaluated a small number of times,
we follow \cite{HDSA_MD_2} and consider a model discrepancy representation that is an affine function of $\z$:
\begin{equation}
  \label{eq:slope_char_discrep}
  \d(\z,\t) = \mc{I}(\t) + \mc{K}(\t) \M_{\z} \z,
\end{equation}
where $\mc{I}(\t) \in \R^{n_u}$ extracts the first $n_u$ elements of $\t$, and $\mc{K}(\t) \in \R^{n_u \times n_z}$ is a row-wise reshaping of the remaining elements of $\t$; the mass matrix $\M_\z$ ensures that the representation is discretization-invariant. The representation in \Cref{eq:slope_char_discrep} can be succinctly expressed in terms of the Kronecker product with the identity matrix, $\I_{n_u} \in \R^{n_u \times n_u}$, as
\begin{equation}
\d(\z,\t) =
\left( \begin{array}{cc}
\I_{n_u} & \I_{n_u} \otimes \z^\top \M_{\z}
\end{array} \right) \t ,
\label{eq:delta_kron}
\end{equation}
where the parameter dimension is $n_\theta=n_u(n_z+1)$. This Kronecker product representation is key to enabling efficient computations with the discrepancy parameterization, reducing computations in $\R^{n_\theta}$ to those in $\R^{n_u}$ and $\R^{n_z}$. While the affine model \cref{eq:slope_char_discrep} provides a simple and useful characterization of local discrepancy sufficient for most data-sparse applications, we mention that other representations that are linear in $\t$ can also be considered in place of \cref{eq:delta_kron} with appropriate modifications (see \Cref{sec:conclusion}).

The goal of calibration is to utilize discrepancy evaluations to estimate $\t$. However, limitations on the number of high-fidelity evaluations leads to a highly underdetermined calibration problem where there are infinitely $\t$'s such that $\d(\z,\t)$ exactly fits the model discrepancy data. Therefore, we turn to a Bayesian framework, in which we utilize a prior distribution for the unknown parameter $\t$, to obtain an informative \textit{posterior} distribution for $\t$. The prior guides the calibration of model discrepancy in directions that are not well-informed by the data and hence is crucial in data-limited settings.

\boldheading{Defining the prior distribution}
To build on existing theory and enable closed-form expressions for the posterior, we specify a mean-zero Gaussian prior for $\t$ with covariance $\W_\t^{-1} \in \R^{n_\theta \times n_\theta}$. Specifically, we aim to construct the precision matrix $\W_\t$ in a manner that reflects prior belief on the discrepancy operator. To this end, let $\W_\u^{-1} \in \mathbb{R}^{n_u \times n_u}$ be a state space covariance matrix whose inverse induces a norm suitable for measuring elements in the discrepancy range, i.e., the mean-zero Gaussian measure with covariance $\W_\u^{-1}$ models the discrepancy range space. Similarly, let $\W_\z^{-1} \in \mathbb{R}^{n_z \times n_z}$ be an optimization variable space covariance matrix such that a mean-zero Gaussian distribution with covariance $\W_\z^{-1}$ assigns high probability to physically plausible perturbations of the optimization solution. These matrices may be determined by physical properties of the underlying models. In our numerical experiments (\Cref{sec:results}), we define these covariances as inverses of a Laplacian-like differential operator, as is common in Bayesian inverse problem literature \cite{BuiThanh2013,Huang2022,Stuart2010}. We refer to \cite{HDSA_MD_HP} for additional guidance on prior operators.

Since the discrepancy representation $\d(\z, \t)$ is an affine function of $\z$, it can be characterized by a point evaluation and its Jacobian. Since the post-optimality sensitivity analysis is centered at $\zt$, we define the precision matrix $\W_\t$ such that it regularizes the discrepancy representation as,
\[\t^\top \W_\t \t = ||\d( \zt, \t)||^2_{\W_\u} + ||\nabla_\z \d(\cdot,\t) ||^2_{\mathcal L(\M_{\z} \W_{\z}^{-1} \M_{\z},\W_\u)}\]
where the norm on $\nabla_\z \d(\cdot,\t)$ is a Schatten 2-norm with the domain and range weighted by $\M_{\z} \W_{\z}^{-1} \M_{\z}$ and $\W_\u$, respectively. The inner product choice is chosen to facilitate mesh refinement properties and adequate smoothness in prior discrepancy \cite{HDSA_MD_HP}. This choice results in the precision matrix
\begin{eqnarray}
\label{eq:W_theta}
\W_\t =  \left( \begin{array}{cc}
\W_\u & \W_\u \otimes \ztilde^\top \M_{\z} \\
\W_\u \otimes \M_{\z} \ztilde & \W_\u \otimes \W_\z+ \M_\z \zt \zt^\top \M_\z 
\end{array} \right)  \in \R^{n_\theta \times n_\theta}.
\end{eqnarray} 

\boldheading{Fitting discrepancy data}
We wish to estimate $\t$ using $N$ discrepancy evaluations. Accordingly, let $\{\z_\ell \}_{\ell=1}^{N} \subset \R^{n_z}$ be a set of vectors, which, to facilitate subsequent derivations, we assume to be linearly independent. We acquire discrepancy data $\vec{d}_\ell := S(\z_{\ell}) - \tilde{S}(\z_{\ell})$ at each of the data points $\z_{\ell}$ and aim to fit $\t$ such that $\d(\z_{\ell}, \t) \approx \vec{d}_{\ell}$ for $\ell = 1, 2, \dots, N$. We formulate this more precisely as follows. Let
\begin{eqnarray}
\label{eq:Aell}
\A_\ell = \begin{pmatrix} \I_{n_u} & \I_{n_u} \otimes \z_\ell^\top \M_{\z} \end{pmatrix} \in \R^{n_u \times n_\theta}, \qquad \ell=1,2,\dots,N,
\end{eqnarray}
so that $\d(\z_\ell,\t)=\A_\ell \t$, and consider the concatenation of these matrices
\begin{eqnarray*}
\A = \left(
\begin{array}{c}
\A_1 \\
\A_2 \\
\vdots \\
\A_N
\end{array}
\right) \in \R^{n_uN \times n_\theta}.
\end{eqnarray*}
This way, $\A \t \in \R^{n_uN}$ corresponds to the evaluation of $\d(\z,\t)$ for all of the input data. In an analogous fashion, let $\vec{d} \in \R^{n_uN}$ be defined by stacking $\vec{d}_\ell$, $\ell=1,2,\dots,N$, into a vector. We ultimately seek $\t$ such that $\A \t \approx \vec{d}$. 

\boldheading{Defining the noise model}
In Bayesian inference, data often arises from physical observations or measurements, which are inherently prone to measurement error or limited precision. In such cases, observation noise is modeled using physical insights or heuristics. However, in the present context, data pertains to the simulation of the model discrepancy at different inputs. Although this type of data may not contain experimental error, there is error due to fitting an affine operator $\d(\z, \t)$ to approximate a potentially nonlinear operator, $S(\z) - \St(\z)$. Therefore, we specify the noise model in a way that reflects our belief about how well the affine map should fit the data. To enable a closed-form expression for the posterior, we consider an additive mean-zero Gaussian noise model. Based on our interpretation of noise as linearization error, we define the noise covariance matrix as $\W_{\vec{d}}^{-1} = \alpha_{\vec{d}} (\I_N \otimes \M_\u^{-1})$, so that the noise precision matrix defines a norm scaled by $\alpha_{\vec{d}}^{-1} > 0$ to control the magnitude of the misfit error. Note that this formulation considers the linearization error at different evaluations to be independent. 

Thanks to the linear dependence of $\d(\z, \t)$ on $\t$ and Gaussian characterization of noise and prior models, the posterior distribution of $\t$ is Gaussian with its mean and covariance given by
\begin{equation}
\label{eq:post_mean_product}
\tbar =  \mat{\Sigma}_\t \A^\top \W_{\vec{d}}\ \vec{d}, \quad \text{and} \quad \mat{\Sigma}_\t = \left( \W_\t + \A^\top \W_{\vec{d}} \A \right)^{-1},
\end{equation}
respectively. The maximum-a-posteriori point (MAP) estimate $\tbar$ provides a best-guess estimate of $\t$ based on prior belief and discrepancy data, while $\mat{\Sigma}_\t$ enables the quantification of uncertainty in $\t$. The Kronecker structure of the prior operators and $\A$ allows for the efficient representation of the posterior covariance and enables fast sampling from the posterior distribution; see Appendix~\ref{ssec:app_post_fact} and \cite{HDSA_MD_3} for details.

\subsection{Limitations with existing methods} \label{ssec:limitations}
The HDSA-MD framework outlined in \Cref{ssec:optim_form,ssec:bayes_ip} enables efficient updates of optimal solutions using limited evaluations of the high-fidelity model. 
Nevertheless, there are key limitations with the existing framework that restrict its potential. 

One of the major shortcomings is that the mapping used to update the optimal solution from the model discrepancy parameterization is approximated by a linear operator, namely the post-optimality sensitivity operator at the low-fidelity optimal solution. However, the optimal solution depends nonlinearly on $\t$. Therefore, the linear approximation \cref{eq:lin_sol} may be inaccurate for large discrepancies. Second, the effectiveness of the model calibration and optimal solution update hinges on the availability of high-quality data near the high-fidelity optimizer. However, the high-fidelity discrepancy data available is often very scarce (e.g., $N < 5$). Therefore, it is crucial that the data is collected strategically, in a way that is tailored to the HDSA-MD framework, in order to gain the most from the limited evaluations. The goal of this paper is to address these two limitations. \Cref{sec:continuation} aims to mitigate the linearization error in a post-optimality sensitivity update through pseudo-time continuation, and \Cref{sec:OED} introduces a sequential OED framework for the optimal acquisition of discrepancy data.

\section{Psuedo-time continuation}
\label{sec:continuation}
In this section, we mitigate the linearization error occurring in the post-optimality update by utilizing a pseudo-time continuation approach \cite{Continuation1,Continuation2}. Recall the projected optimal solution update \cref{eq:lin_sol} and observe that it may be viewed as an approximation to the solution of a projected optimization problem. In particular, since we are primarily interested in directions that the objective function $J$ is sensitive to, we focus on obtaining a solution to~\cref{eq:minJ_z_theta} that lies in directions defined by the range of the projector $\P$. This transforms the problem of optimizing \cref{eq:minJ_z_theta} over $\z \in \R^{n_z}$ to 
\begin{equation}
  \label{eq:minJ_b_theta}
  \min_{\b \in \R^r} \mc{J}(\z_{\b}, \t) := J(\tilde{S}(\z_{\b}) + \delta(\z_{\b}, \t), \z_{\b}), \qquad \z_{\b} := \zt + \V\b.
\end{equation}
Performing post-optimality sensitivity analysis to obtain the local sensitivity of the optimal $\b$ to $\t$ in \cref{eq:minJ_b_theta} yields the same update as in \cref{eq:lin_sol}. However, by utilizing pseudo-time continuation on \cref{eq:minJ_b_theta}, we mitigate the effect of linearization error and hence improve the optimization solution update.

Recall the parameter estimate $\tbar$ \cref{eq:post_mean_product} obtained through the discrepancy calibration and form a line segment from $\vec{0}$ to $\tbar$,
\[\t(t) = t\tbar, \quad t \in [0, 1].\]
Let $\b^*(t)$ denote the minimizer of \cref{eq:minJ_b_theta} for $\t = \t(t)$. Note that for a non-convex optimization problem, the minimizer may not be unique. However, if the Hessian is positive definite for all $t \in [0,1]$, $\b^*(t)$ is uniquely defined by restricting to a neighborhood of the minimizers from times $0 \le s<t$. For each value of $t$, using the Implicit Function Theorem and an analogous argument to that in \Cref{ssec:optim_form}, we obtain the post-optimality sensitivity of $\b^*$ with respect to $\t$, centered at $\t(t)$.
Using the chain rule, and through abuse of notation writing $\mathcal{J}$ as a function of $\b$ rather than $\z$, we can express 
\[\frac{d \b^*}{dt} = \frac{\partial \b^*}{\partial \t} \cdot \frac{d \t}{dt} = -\nabla_{\b,\b} \mathcal{J}(\b^*(t), \t(t))^{-1}\ \nabla_{\b,\t} \mathcal{J}(\b^*(t), \t(t)) \tbar.\]
Therefore, we may obtain $\b^*(t)$ for $t \in (0, 1]$ by solving the ordinary differential equation (ODE), 
\begin{align}
  \label{eq:cont_ivp}
  \frac{d \b^*}{dt} &= -\nabla_{\b,\b} \mathcal{J}(\b^*(t), \t(t))^{-1}\ \nabla_{\b,\t} \mathcal{J}(\b^*(t), \t(t)) \ \tbar, \\
  \b^*(0) &= \vec{0}. \nonumber
\end{align}
Our goal is to obtain the optimal solution at $\tbar$, that is, obtain $\b^*(1)$. Equation \cref{eq:lin_sol} corresponds to an approximation of the solution to \cref{eq:cont_ivp} via a single forward Euler time step with step size $\Delta t = 1$. To improve the solution, we deploy a time integration scheme with $\Delta t < 1$ to solve the ODE~\cref{eq:cont_ivp}. However, we observe that \cref{eq:cont_ivp} possesses additional structure due to the first-order optimality condition, $\nabla_{\b} \mathcal{J}(\b^*(t), \t(t))=\vec{0}$ for all $t$. This motivates the use of specialized predictor-corrector time integration schemes. We leverage the Euler-Newton method that predicts with a forward Euler time step and corrects with a Newton optimization step that seeks to enforce $\nabla_{\b} \mathcal{J}(\b^*(t), \t(t))=\vec{0}$. 

Let $N_c$ denote the number of continuation steps (or time steps from the ODE perspective). For $n = 0, 1, \dots, N_c$, define $t_n=n {\Delta t}$, $\t_n = t_n \tbar$, let $\b_n$ denote the numerical approximation of $\b^*(t_n)$, and $\b_0=\vec{0}$. The optimal solution $\b^*$ at $t_{n+1}$ is approximated by $\b_{n+1}$, via the predictor-corrector scheme\footnote{If $N_c$ is too small, then the predictor-corrector scheme may leave the Newton basin of attraction and the iteration will not converge. In such cases, increasing $N_c$ sufficiently will remedy the issue.}
\begin{equation}
  \label{eq:ctn2}
    \begin{cases}
        \b_{n+1}^{\textrm{pred}} = \b_n -  {\Delta t} \left(\nabla_{\b,\b} \mathcal{J}(\b_{n}, \t_{n})\right)^{-1} \nabla_{\b,\t}\mc{J}(\b_{n}, \t_{n}) \tbar , \\[1ex]
        \b_{n+1} = \b_{n+1}^{\textrm{pred}} - \left(\nabla_{\b,\b} \mathcal{J}(\b_{n+1}^{\textrm{pred}}, \t_{n+1})\right)^{-1} \nabla_{\b}\mc{J}(\b_{n+1}^{\textrm{pred}}, \t_{n+1}).  \\[2ex]
      \end{cases}
\end{equation}
Iterating \cref{eq:ctn2} through $n = 0, 1, \dots, N_c - 1$, we obtain $\bar{\b} := \b_{N_c}$ and the corresponding optimal solution update, $\zbar = \zt + \mat{V}\bar{\b}$.
While the original linearization-based approach is adequate when the discrepancy lies within a certain local neighborhood, it struggles to perform well when further away. The continuation algorithm addresses this by instead sequentially performing linearization with a smaller step, $\Delta t \bar{\t}$, to obtain the next iterate. Moreover, the corrector at each iterate enhances the quality of solutions and reduces the accumulation of error over iterates. In addition to accurately estimating the solution of~\cref{eq:minJ_b_theta}, the continuation approach~\cref{eq:ctn2} enjoys additional computational benefits. Although the sequence of iterations~\cref{eq:ctn2} has a similar form to Newton steps in an optimization algorithm, they can be executed more efficiently. If the solution of the ODE system~\cref{eq:cont_ivp} also satisfies the second-order optimality condition, which is frequently true in practice, then specialized Hessian preconditioners can be leveraged to accelerate the inverse Hessian computations \cite{Continuation2}. We also point out that the continuation techniques presented in this section can be modified to obtain (pushforward) posterior samples of the optimal solution update, which can be useful for uncertainty quantification. Specifically, using the procedure above with $\overline{\t}$ replaced by a posterior sample $\t^\dagger$ will render an optimal solution posterior sample.

\section{Optimal Experimental Design}
\label{sec:OED}
Given that evaluations of the high-fidelity model are limited, it is crucial that the high-fidelity evaluations are run with inputs $\{ \z_1,\z_2,\dots,\z_N \}$ that provide the most valuable information to enhance the optimization problem solution update. We focus on (i) sequential approaches---using the inference from the discrepancy data evaluations at $\{\z_1,\z_2,\dots,\z_k\}$ to determine a single input $\z_{k+1}$, and (ii) batch sequential approaches---using the inference from the discrepancy data evaluations at $\{ \z_1,\z_2,\dots,\z_k \}$ to determine a batch of inputs $\{ \z_{k+1},\z_{k+2},\dots,\z_{k+p} \}$. Batch sequential approaches are sometimes favored if multiple high-fidelity simulations can be run in parallel. 

The simplest approach, which we refer to as \textit{optimal solution update tracing}, is to sequentially collect data at the optimal solution update itself. Specifically, initializing $\z_1=\zt$, we select $\z_{k+1}$ as the predicted optimization solution attained by HDSA-MD using the data $\{\z_1,\z_2,\dots,\z_k\}$, $k=1,2,\dots$. However, there are a few potential downsides to this approach. One key issue is that such an approach does not consider the uncertainty present in calibration. In other words, there could be relatively low uncertainty in the discrepancy at $\z_{k+1}$, but high uncertainty in whether $\z_{k+1}$ approximates the optimization solution. In such cases, collecting data at the optimal solution update could be rather uninformative. 

As a simple illustrative example, suppose that the discrepancy operator, which in general satisfies $S(\z)-\tilde{S}(\z) \ne \vec{0}$, has a root at $\zt$, i.e., $S(\zt)-\tilde{S}(\zt)=\vec{d}_1=\vec{0}$. Then, the MAP point will default to the zero prior mean ($\tbar = \vec{0}$), and thus the optimal solution update remains at $\zbar_1 = \zt$. If the optimal solution update tracing approach is used for data collection, the algorithm will select $\z_{k+1}=\zt$ for all $k$, leading to no meaningful improvement. Although this particular scenario may not occur, similar situations (e.g., cases where discrepancy magnitude varies significantly throughout the domain) may arise in practice, causing stagnation in performance due to the lack of variation in data. Such examples point to the need for data collection approaches that balance exploration---collecting data in new locations---and exploitation---utilizing existing knowledge to constrain the exploration. The optimal solution update tracing relies too heavily on exploitation without having an option to increase exploration. 

In this article, we utilize an uncertainty-reduction approach to select the next model inputs for discrepancy evaluation. We sequentially acquire data, perform model discrepancy calibration, update the optimal solution, and use this information to determine the next model inputs for discrepancy evaluation. However, our approach differs from the optimal solution update tracing approach in that we leverage a Bayesian framework to trade off exploration and exploitation, in addition to enabling batch sequential designs.

\subsection{Design Setup}
\label{ssec:design_setup}
Assume that the first input is chosen at the low-fidelity optimization solution $\zt$. Although this assumption is not essential, taking $\z_1=\zt$ has both practical and theoretical benefits. Our approach to select the subsequent inputs is formulated inductively as follows. 
Suppose we have collected the first $k$ data points $\{\z_\ell, \vec{d}_\ell \}_{\ell=1}^k$, performed model calibration to obtain the posterior distribution, $\t \sim \mathcal{N}\!\left( \tbar_k, \mat{\Sigma}_{\t,k} \right)$, and computed the optimal solution update, $\zbar_k$. Our goal is to collect the next $p$ data points such that uncertainty in $\delta(\z, \t)$ near $\zbar_k$ is reduced. This can be formulated as an optimization problem of the form 
\begin{equation}
	\min_{\z_{k+1},\z_{k+2}, \dots, \z_{k+p}} \Psi_k(\z_1, \z_2, \dots, \z_{k+p}), 
	\label{eq:OED_crit_optim}
\end{equation}
where $\Psi_k$ is a design criterion measuring uncertainty given high-fidelity model evaluations at the inputs $\z_1, \z_2, \dots, \z_{k+p}$. As a first step, we constrain the data points $\{ \z_{1},\z_{2},\dots,\z_{k+p} \}$ to the range of the projector $\P$ since all updates are constrained to this subspace. Consider data points
\begin{equation}
\z_\ell = \ztilde + \sum\limits_{j=1}^r \eta_j^{\ell} \v_j \qquad \ell=1,2,\dots,k+p.
\label{eq:z_ell_proj}
\end{equation}
Since we only need to optimize over designs $\{\z_{k+1},\z_{k+2},\dots,\z_{k+p} \}$, 
we have a total of $pr$ degrees of freedom to determine a design. Therefore, it suffices to optimize \cref{eq:OED_crit_optim} over the design parameters $\left\{ \eta_j^{\ell} \right\}_{j=1}^r$ where $\ell = k+1, k+2, \dots, k+p$. 

\subsection{OED criterion}
\label{ssec:oed_crit}
We aim to formulate a criterion for optimal experimental design that is tailored for the model discrepancy calibration problem at hand. The Bayesian framework allows us to quantify posterior uncertainty in $\t$. Due to the linear nature of the inverse problem and Gaussian prior and noise assumptions, the covariance of $\t$ depends only on $\{\z_{\ell}\}_{\ell=1}^{k+p}$ (where discrepancy data is collected) and is independent of the discrepancy data $\{ \vec{d}_\ell \}_{\ell=1}^{k+p}$. Therefore, we may strategically acquire data such that uncertainty is minimized in quantities of interest. In particular, we are most interested in reducing uncertainty of the discrepancy near the best-guess solution $\zbar_k$. To this end, we utilize a type of goal-oriented A-optimal criterion tailored to quantify uncertainty near the optimal solution update. This yields a useful statistical interpretation \cite{alexanderian_bayesian_2016,chaloner_bayesian_1995} and facilitates efficient closed-form expressions of the OED criterion. We point out that other classical criteria (e.g., D-optimality) may also be analogously adapted in this context. We define the criterion $\Psi_{k}$ as follows.

\begin{definition}
For a fixed $\z \in \R^{n_z}$, let $\vec{\Sigma}_\d (\z) \in \R^{n_u \times n_u}$ denote the covariance matrix of $\d(\z, \t)$, the pushforward of the posterior distribution of $\t$ through the discrepancy representation $\d(\z,\cdot)$. Define the criterion,
  \begin{equation}
    \Psi_{k}(\z_1, \z_2, \dots, \z_{k+p}) = \mathbb{E}_{\z \sim \mu_{k}}\!\left\{ \tr(\vec{\Sigma}_\d (\z) \M_\u) \right\},
    \label{eq:crit}
  \end{equation}
  where $\mu_{k}$ denotes a Gaussian measure $\mathcal{N}(\zbar_k, \aoed \W_{\z}^{-1})$ with $\aoed > 0$, and $\tr(\cdot)$ denotes the standard Euclidean trace.
  \label{def:criterion}
\end{definition}

The criterion \cref{eq:crit} quantifies posterior uncertainty in the estimated discrepancy in a local neighborhood of the optimal solution update, $\zbar_k$. Therefore, acquiring data that minimizes $\Psi_{k}$ enables targeted and optimal uncertainty reduction. We also highlight that the trace in \cref{eq:crit} is weighted with $\M_\u$ to ensure adequate mesh scaling. The criterion can be also expressed in an information-theoretic sense as the expected Bayes risk, as seen in \Cref{thm:BayesRisk}. 

\begin{theorem}
  Let $k$ and $p$ be positive integers. 
  The data acquisition criterion from Definition~\ref{def:criterion} satisfies,
  \[\Psi_{k} = \Exp_{\z \sim \mu_k}\left\{ \Exp_{\mu_{\mathrm{pr}}}\left\{ \Exp_{\vec{d} | \t}\left\{ \|\d(\z, \tbar_{k+p}) - \d(\z, \t)\|_{\mat{M}_u}^2 \right\} \right\} \right\},\]
  where $\mu_{\mathrm{pr}} = \mathcal{N}\!\left( \vec{0}, \mat{W}_{\t}^{-1} \right)$ denotes the prior distribution of $\t$, $\tbar_{k+p}$ denotes the posterior mean given data inputs $\{\z_1, \z_2, \dots, \z_{k+p}\}$ and corresponding discrepancy data $\vec{d}$.
  \label{thm:BayesRisk}
\end{theorem}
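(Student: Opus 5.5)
The identity is the standard fact that, in a linear-Gaussian model, the Bayes risk of the posterior mean equals the trace of the posterior covariance. The plan is to establish it pointwise in $\z$ and then take the outer expectation over $\mu_k$. Throughout, the posterior covariance in Definition~\ref{def:criterion} is the one obtained from all $k+p$ inputs, $\mat{\Sigma}_{\t,k+p}=(\W_\t+\A^\top\W_{\vec{d}}\A)^{-1}$, where $\A$ stacks $\A_1,\dots,\A_{k+p}$. This covariance does not depend on the data $\vec{d}$.

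First, fix $\z$ and write $\d(\z,\t)=\A_\z\t$ with $\A_\z=\begin{pmatrix}\I_{n_u} & \I_{n_u}\otimes\z^\top\M_\z\end{pmatrix}$, using \cref{eq:delta_kron}. By linearity, $\vec{\Sigma}_\d(\z)=\A_\z\mat{\Sigma}_{\t,k+p}\A_\z^\top$. The integrand of the inner expectations is therefore
\[
\|\A_\z(\tbar_{k+p}-\t)\|^2_{\M_\u}=\tr\!\left(\M_\u^{1/2}\A_\z(\tbar_{k+p}-\t)(\tbar_{k+p}-\t)^\top\A_\z^\top\M_\u^{1/2}\right).
\]
Since $\A_\z$ is deterministic, the task reduces to showing
\[
\Exp_{\mu_{\mathrm{pr}}}\Exp_{\vec{d}|\t}\!\left[(\tbar_{k+p}-\t)(\tbar_{k+p}-\t)^\top\right]=\mat{\Sigma}_{\t,k+p}.
\]
Traces and linear maps then pass through the expectation, and cyclicity of the trace gives $\tr(\vec{\Sigma}_\d(\z)\M_\u)$.

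To prove this matrix identity, use the data model $\vec{d}=\A\t+\vec{\epsilon}$ with $\vec{\epsilon}\sim\mathcal{N}(\vec{0},\W_{\vec{d}}^{-1})$ independent of $\t\sim\mu_{\mathrm{pr}}$. Then, by \cref{eq:post_mean_product},
\[
\tbar_{k+p}-\t=(\mat{\Sigma}_{\t,k+p}\A^\top\W_{\vec{d}}\A-\I)\t+\mat{\Sigma}_{\t,k+p}\A^\top\W_{\vec{d}}\vec{\epsilon}.
\]
From $\mat{\Sigma}_{\t,k+p}^{-1}=\W_\t+\A^\top\W_{\vec{d}}\A$ it follows that $\mat{\Sigma}_{\t,k+p}\A^\top\W_{\vec{d}}\A-\I=-\mat{\Sigma}_{\t,k+p}\W_\t$. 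Hence
\[
\tbar_{k+p}-\t=\mat{\Sigma}_{\t,k+p}\left(-\W_\t\t+\A^\top\W_{\vec{d}}\vec{\epsilon}\right).
\]
The two terms are independent and mean zero, so the cross terms vanish in expectation. The second moment is
\[
\mat{\Sigma}_{\t,k+p}\left(\W_\t\W_\t^{-1}\W_\t+\A^\top\W_{\vec{d}}\W_{\vec{d}}^{-1}\W_{\vec{d}}\A\right)\mat{\Sigma}_{\t,k+p}=\mat{\Sigma}_{\t,k+p}\mat{\Sigma}_{\t,k+p}^{-1}\mat{\Sigma}_{\t,k+p}=\mat{\Sigma}_{\t,k+p}.
\]

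Finally, take $\Exp_{\z\sim\mu_k}$ on both sides of the pointwise identity to recover \cref{eq:crit}. One detail to note is that the outer $\mu_k$ depends on the first $k$ data through $\zbar_k$. This is harmless, because the inner expectations are over a fresh prior draw and fresh data at the fixed design $\{\z_\ell\}_{\ell=1}^{k+p}$, so $\mu_k$ is treated as fixed.

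There is no genuine obstacle here. The step needing the most care is the bookkeeping: applying the Woodbury-type identity to eliminate $\A^\top\W_{\vec{d}}\A$, and justifying the interchange of expectation and trace. The latter is fine because all moments are finite and $\Exp_{\mu_k}$ of a quadratic in $\z$ exists for the Gaussian $\mu_k$.
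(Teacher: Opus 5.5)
Your proof is correct and follows the paper's route. The paper obtains the pointwise identity $\tr(\vec{\Sigma}_\d(\z)\M_\u)=\Exp_{\mu_{\mathrm{pr}}}\Exp_{\vec{d}|\t}\|\d(\z,\tbar_{k+p})-\d(\z,\t)\|_{\M_\u}^2$ by writing $\d(\z,\t)=\A(\z)\t$ and invoking \cite[Proposition 3.1]{Attia_2018}, and then averages over $\z\sim\mu_k$, just as you do; the only difference is that you derive the cited Bayes-risk identity directly from $\tbar_{k+p}-\t=\mat{\Sigma}_{\t,k+p}\bigl(-\W_\t \t+\A^\top\W_{\vec{d}}\vec{\epsilon}\bigr)$, which makes the argument self-contained and shows that the full-row-rank property of $\A(\z)$ mentioned in the paper is not needed.
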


\begin{proof}
  The proof of this result mostly follows from \cite[Proposition 3.1]{Attia_2018}. Namely, we express $\d(\z, \t) = \mat{A}(\z)\t$, where $\mat{A}(\z)$ is a matrix with full row-rank, and express
  \[ \tr(\vec{\Sigma}_\d (\z)\M_{\u}) = \Exp_{\mu_{\mathrm{pr}}}\left\{ \Exp_{\vec{d} | \t}\left\{ \|\d(\z, \tbar_{k+p}) - \d(\z, \t)\|_{\mat{M}_u}^2 \right\} \right\}.\]
  Taking expectation over $\z \sim \mu_k$ yields the desired result.
\end{proof}

\Cref{thm:BayesRisk} indicates that the OED criterion is expressible as the expected squared deviation of the discrepancy calibration in a neighborhood of the \textit{best-guess} optimal solution. Therefore, minimizing the criterion amounts to ensuring the discrepancy calibration is accurate near $\zbar_k$, on average. Utilizing the Bayesian framework, our approach is also able to design batches of informative inputs simultaneously with $p > 1$.

\boldheading{Selection of $\aoed$}
The choice of the scalar hyperparameter $\aoed > 0$ gives control over the trade-off between exploration (larger $\aoed$) and exploitation (smaller $\aoed$). However, in many cases, it can be difficult to determine $\aoed$ in advance due to limited information on the optimization landscape. Nevertheless, we provide a heuristic approach to determine a suitable choice of the parameter. We adaptively select $\aoed$ in accordance to the distance between subsequent optimal solution updates, i.e., such that
\begin{equation}
  \mathbb{E}_{\z \sim \mu_{k}}\|\z - \zbar_k\|_{\M_{\z}}^2 = \|\zbar_{k} - \zbar_{k-p}\|_{\M_{\z}}^2.
  \label{eq:aoed_motivation}
\end{equation}
Simplifying the expectation in \cref{eq:aoed_motivation} and solving for $\aoed$, we define the hyperparameter\footnote{Note that $\zbar_0=\tilde{\z}$, $\z_1=\tilde{\z}$, and $\zbar_1$ is the predicted optimal solution using the data $\{\z_1,S(\z_1)-\tilde{S}(\z_1)\}$, so $\alpha_1$ can be initialized using~\cref{eq:aoed}. Manual specification of $\alpha_k$ is not typically required.} for $k \geq p$ as 
\begin{equation}
  \aoed = \frac{\|\zbar_{k} - \zbar_{k-p}\|_{\M_{\z}}^2}{\tr(\W_{\z}^{-1}\M_{\z})}.
  \label{eq:aoed}
\end{equation}
Observe that this choice of $\alpha_k$ is proportional to the distance between subsequent optimal solution updates. In other words, the criterion prioritizes uncertainty reduction of the discrepancy in a smaller neighborhood of $\zbar_k$ as the distance between optimal solution updates becomes smaller, which serves an indicator of some convergence. This approach incentivizes exploration when optimal solution updates are distant and exploitation when optimal solution updates are close. Although \cref{eq:aoed} serves as a general heuristic, in applications where there is additional information on the variability of the model discrepancy, one can select $\aoed$ in a more informative way; e.g., a larger $\aoed$ can be supplied to motivate exploration if the discrepancy does not vary much in $\mu_k$.
 
When minimizing the OED criterion, we also enforce that data remains within one Mahalanobis unit of $\zbar_k$. Specifically, we augment~\eqref{eq:OED_crit_optim} with the additional constraint:
\begin{equation}
  \| \z_{\ell} - \zbar_{k}\|^2_{\W_{\z}} \leq \aoed,\qquad \textrm{for }\ell = k+1, k+2, \dots, k+p.
\end{equation}
In other words, new data point(s) are acquired no further from $\zbar_k$ than $\| \zbar_k - \zbar_{k-1}\|$. This ensures that the resulting data remains in a reasonable range. With this constraint, observe that the optimal solution update tracing approach corresponds to taking $p=1$ and letting $\aoed \to 0$, and hence is a special case of our approach.  

We discuss the efficient computation of the OED criterion and its gradient in Appendix~\ref{ssec:app_fast_OED}. Specifically, we leverage the Kronecker product structure of the model discrepancy representation and prior covariance matrix to express posterior quantities in terms of Kronecker products in the state and optimization variable spaces. With appropriate offline computations, we show that the OED criterion and its gradient can be evaluated with a leading cost of $\mc{O}((k+p)n_u)$ floating point operations. 

\section{Numerical results}
\label{sec:results}
In this section, we apply the HDSA-MD framework to two different illustrative applications. First, we consider a control problem for a nonlinear diffusion-reaction PDE. This application, covered in \Cref{ssec:dr_results}, serves as a sandbox to analyze and gain insight into various aspects of our proposed framework. This is followed by a contaminant source identification problem involving a fluid flow model, detailed in \Cref{ssec:NS_results}. This example serves a numerical application and highlights the potential of HDSA-MD equipped with OED and continuation for complex nonlinear models.

\subsection{Diffusion-Reaction Problem}
\label{ssec:dr_results}
Consider the optimal control problem,
\begin{align}
\label{eq:diff_react_obj}
\min_{z} J(S(z),z) \coloneqq \frac{1}{2} \int_0^1 (S(z)(x) - T(x))^2 dx + \frac{\gamma}{2} \int_0^1 z(x)^2 dx
\end{align}
where $z:[0,1] \to \R$ is the control variable, and $T(x) = 20(x+.5)(1.3-x)$ is a target state. The high-fidelity model $S(z)$ is defined as the solution operator to the diffusion-reaction model,
\begin{equation}
\begin{alignedat}{2}
-\kappa \difft{u}{x} + R(u) = z &\qquad& \text{on } (0,1) \\
\kappa \diff{u}{x} = 0 &\qquad& \text{on } \{0,1\} 
\end{alignedat}
\label{eq:DR_PDE}
\end{equation}
where $\kappa>0$ is the diffusion coefficient and $R(u) = (1+.7 \sin(2 \pi x)) u^2$ is a spatially heterogeneous reaction function. We consider a low-fidelity model $\St(z)$ which simplifies physics by using a spatially homogeneous reaction function $\tilde{R}(u) = u^2$ in place of $R(u)$. Note that this model problem enables computation of both the low-fidelity solution $\tilde{z}$ and the high-fidelity solution $z^\star$, and thus provides a useful testbed for numerical experiments.
\subsubsection{Experiments on Continuation}
\label{sssec:exp_cont}
In this section, we analyze the effect of pseudo-time continuation for optimal solution updates (see \Cref{sec:continuation}). To isolate the role of continuation within the HDSA-MD framework, we fix (rather than estimate from data) the discrepancy parameter to a nominal value $\t = \t_{\text{nom}}$ obtained by linearizing the discrepancy $S(\z) - \St(\z)$ at the true high-fidelity optimal solution. We then perform continuation to obtain the optimal solution update $\zbar$ as described in \Cref{sec:continuation}. We compare the high-fidelity objective $\mathcal{J}(\zbar, \t_{\text{nom}})$, where a collection of $\zbar$'s are computed by varying the number of continuation steps. 
\Cref{fig:cont-steps} (left) displays the error in the objective function evaluated at the optimal solution updates. We measure error relative to the optimal value $\min_{\b \in \R^r} \mathcal{J}(\z_{\b}, \t_{\text{nom}})$. This eliminates any approximation error due to the quality of the projector.

In \Cref{fig:cont-steps} (left), observe that the relative error in the objective function consistently decreases with the number of continuation steps. It is notable that the error reduction due to additional continuation steps is significant initially, as the continuation approach begins to capture the nonlinearity in the optimization update trajectory. However, after a sufficient number of continuation steps, other sources of numerical error, such as those arising from linear solves, begin to dominate the optimization error, leading to minimal improvement with more refined continuation. For problems that we have considered, a moderate number of continuation steps (e.g., $3-4$) usually suffices.  

\begin{figure}[h]
  \centering
  \includegraphics[width=2.25in]{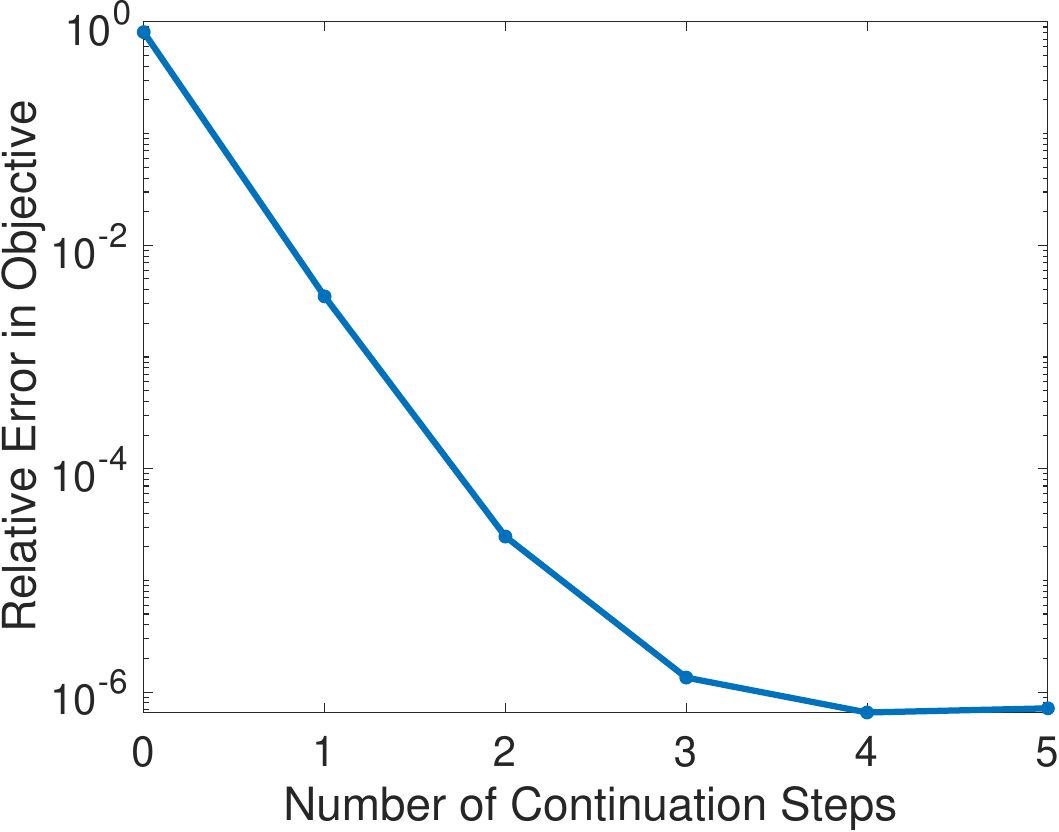}
  \qquad
  \includegraphics[width=2.5in]{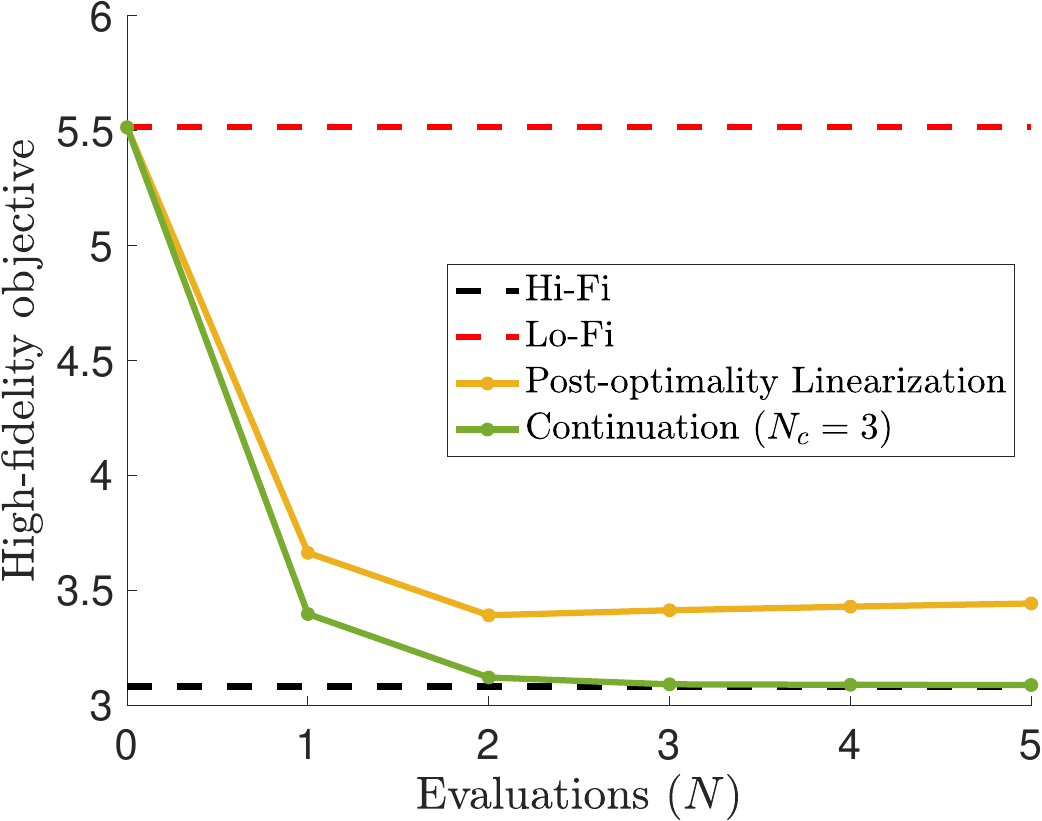}
  \caption{Optimization objective over continuation steps (left) and number of data points (right).}
  \label{fig:cont-steps}
\end{figure}

In addition to verifying the accuracy of continuation in isolation, we may observe the impact and role of continuation in conjunction with the rest of the framework. To this end, we perform model discrepancy calibration using a varying number of data points, obtain the optimal solution updates (with and without using continuation), and compare the resulting high-fidelity optimization objectives. Note that for this experiment, data is acquired upfront using the sequential OED approach equipped with continuation. This allows us to compare continuation with post-optimal linearization \cref{eq:lin_sol} when both approaches use the same data; we investigate optimal data acquisition in the context of this application further in \Cref{sssec:exp_oed}. In \Cref{fig:cont-steps} (right), we notice that with standard post-optimality linearization, there is conspicuous stagnation occurring due to the linearization error in the optimal solution updates, such that the optimal solution does not improve despite additional high-fidelity data. On the other hand, using continuation, even with a modest $N_c=3$ steps, offers a meaningful improvement in the quality of optimal solution. With just two or three high-fidelity evaluations, we obtain updated solutions that are $90\%$ closer to the best attainable optimization objective compared to the low-fidelity solution. Therefore, continuation allows for a substantially more accurate optimal solution update with a modest amount of additional computation. 

\subsubsection{Experiments on Optimal Data Acquisition}
\label{sssec:exp_oed}
In this section, we fix the continuation scheme with $N_c=3$ and turn to analyzing the effect of optimal data acquisition in uncertainty reduction and improvement in the quality of the optimal solution. Recall that optimal data acquisition is performed sequentially by minimizing the OED criterion $\Psi_k$ at each step, as outlined in \Cref{sec:OED}. To analyze the effectiveness of this approach, we compare this approach with random data points sampled from the distribution, $\mathcal{N}\left( \zbar_k, \aoed \ \P\W_{\z}^{-1}\P^\top \right)$. The choice of distribution mimics the search region for the optimal design, ensuring that data is sampled in a neighborhood of the optimal solution update along directions relevant to the optimization problem (in the subspace spanned by $\mat{V}$). 

In \Cref{fig:oed-rand-compare} (left), we compare the proposed optimal data acquisition method with random data acquisition. We sweep over a range of $N$'s (the number of high-fidelity evaluations used in the calibration) and compute the high-fidelity objective value at the updated optimal solution. Note that to ensure a fair comparison, the data points $\{\z_1, \dots, \z_k\}$ are obtained through OED when comparing the approaches at $k+1$. As observed in \Cref{fig:oed-rand-compare} (left), the proposed optimal data collection approach yields a better optimal solution update (i.e., a lower objective) compared to random data points on average (dashed line). This aligns with our theoretical observations in \Cref{ssec:oed_crit}. We do however point out that the quality of optimal solution updates cannot be solely used as a metric for the quality of data collection, as the former is also dependent on the profile of the underlying model discrepancy which is unknown a priori.

\begin{figure}[h]
  \centering
  \includegraphics[width=2.35in]{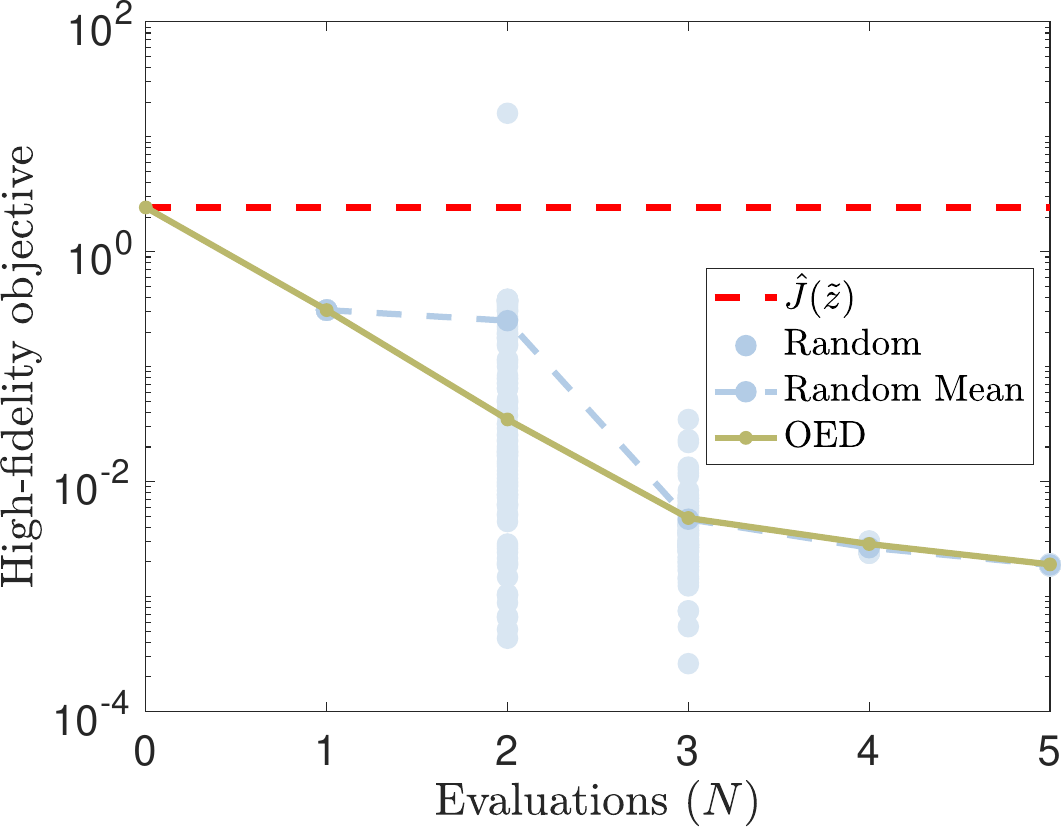}
  \qquad
  \includegraphics[width=2.45in]{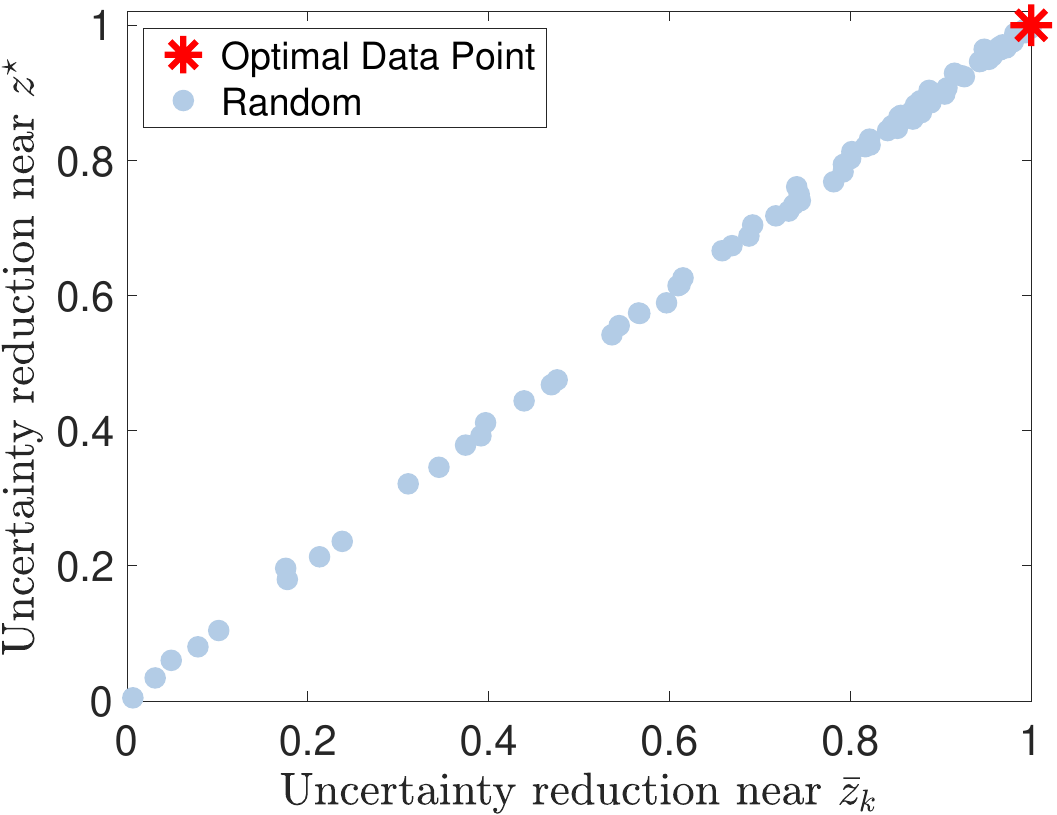}
  \caption{Comparison of objective (left) and OED criteria (right) with random data points.}
  \label{fig:oed-rand-compare}
\end{figure}

Therefore, to assess the effect of the optimal data acquisition in an isolated manner, we additionally analyze the uncertainty in the discrepancy near the high-fidelity optimal solution $\z^\star$. We quantify this uncertainty in an analogous way to $\Psi_k$ as the average variance in the discrepancy, $\Exp_{\z \sim \mu_k}[\tr(\vec{\Sigma}_\d (\z) \M_{\u})]$, over the neighborhood $\z \sim \mathcal{N}(\z^\star, \aoed \W_{\z}^{-1})$. In \Cref{fig:oed-rand-compare} (right), we compare the OED criterion and high-fidelity uncertainty measures for the third high-fidelity model evaluation $\z_3$. Specifically, we compare these metrics at the optimal data point obtained via OED as well as random data points. Since our computational methods ignore design-independent components of the uncertainty measures, we plot relative uncertainty reductions for interpretability. That is, the uncertainty measures are shifted and normalized by the uncertainty in the discrepancy if the $(k+1)$-th data point, $\z_{k+1}$, is placed again at the $k$-th data point, $\z_{k}$.

We observe in \Cref{fig:oed-rand-compare} (right) that the optimal data acquisition results in a noticeable uncertainty reduction in the model discrepancy near $\zbar_k$ compared to random data points. Note that the figure indicates a strong correlation between the uncertainty near $\zbar_k$ and $\z^\star$. This provides validation to the sequential OED approach which uses uncertainty near $\zbar_k$ as a surrogate to perform uncertainty reduction in a neighborhood of $\z^*$; although, the strength of this correlation depends on multiple factors including the distance between $\zbar_k$ and $\z^\star$ and the magnitude of $\alpha_k$. Nevertheless, the optimal data point obtained by the minimization of $\Psi_k$ achieves more uncertainty reduction near $\z^\star$ compared to nearly all random data points.

Lastly, we compare the performance of sequential model discrepancy calibration and data acquisition over the batch size $p$, in \Cref{fig:oed-compare-p}. We observe that data acquisition with larger batch sizes typically yields optimal solution updates with inferior quality. This is expected since sequential data acquisition exploits past discrepancy evaluations to better refine data acquisition in future steps; with larger batch sizes, this capacity becomes limited. Note that in this case, two strategic high-fidelity evaluations collected sequentially results in a better optimal solution update than six high-fidelity evaluations collected in parallel upfront. Nevertheless, given enough rounds of batches ($N \geq 2p$), a modest batch size can perform almost as well as a fully sequential approach. For example, this can be critical in applications where running high-fidelity model simulations consume hours or days in runtime, in which case parallel execution with a modest batch size can provide immense benefit. Therefore, the batch sequential OED approach ($p > 1$) can benefit from the added performance, resource utilization, and efficiency of computational routines in cases where parallel data collection is preferred.

\begin{figure}[ht]
  \centering
  \includegraphics[width=2.5in]{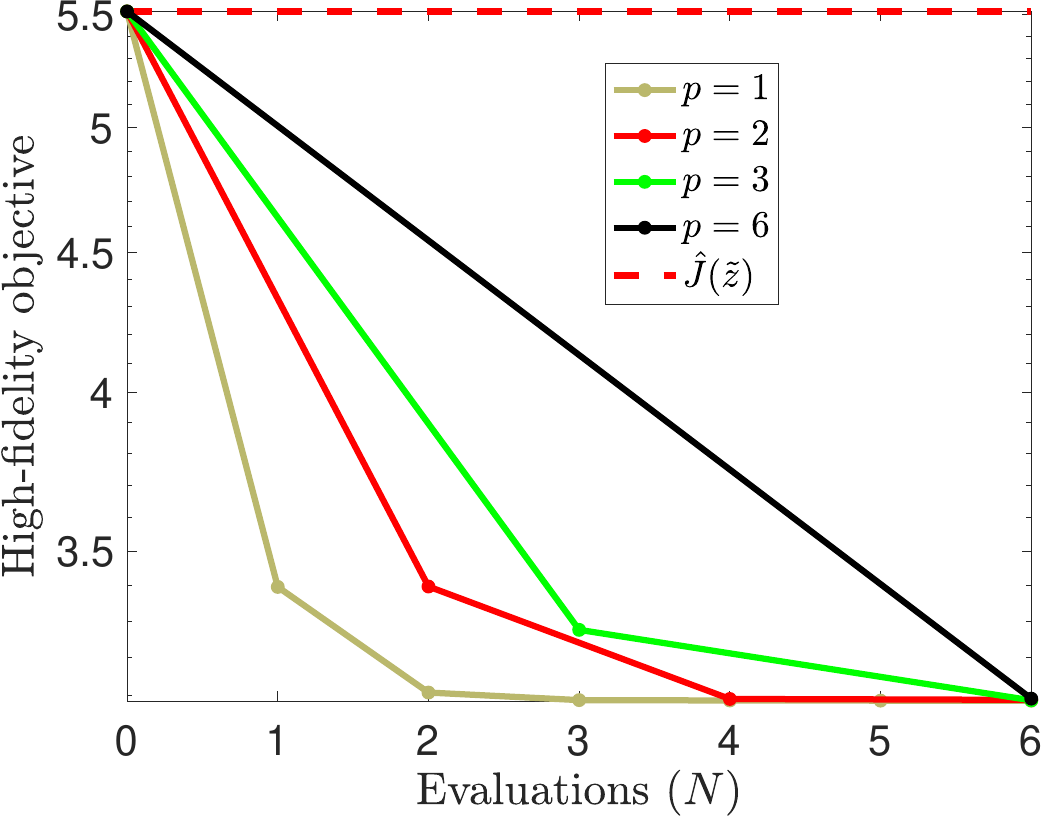}
  \caption{Performance of batch sequential OED with $p = 1, 2, 3$, and $6$.}
  \label{fig:oed-compare-p}
\end{figure}

\subsection{Application to Contaminant Source Identification}
\label{ssec:NS_results}
In this section, we apply our HDSA-MD approach to a fluid flow application involving contaminant source identification. Unlike the diffusion-reaction example, which served to provide insight into specific aspects of HDSA-MD, this example involves intricate couplings and serves to highlight the potential of our framework to more complex domains.
We consider a contaminant with concentration $c(x,t)$ that evolves in a one-dimensional fluid domain $\Omega=[0,1]$ over the time interval $0 \leq t \leq T$. The goal of the inverse problem is to recover the unknown initial concentration $z(x) := c(x,0)$ from the terminal concentration $\bar{c}(x) := c(x, T)$. We pose the reconstruction as the optimization problem,
\begin{equation}
\label{eq:ns_obj}
\min_{z} J(S(z),z) \coloneqq \frac{1}{2} \int_0^1 (S(z)(x) - \bar{c}(x))^2 dx + \frac{\gamma_{r}}{2} \int_0^1 z(x)^2 dx,
\end{equation}
where $S(z)$ is the high-fidelity mapping from the initial concentration to the terminal concentration and $\gamma_r = 10^{-6}$ is a regularization coefficient. The high-fidelity fluid flow model is governed by the 1D transient inviscid flow equations:
\begin{subequations}
\begin{alignat}{2}
  &\diff{\rho}{t} + u\diff{\rho}{x} + \rho\diff{u}{x} = 0 \label{eq:hifi-mass}\\[1mm]
  &\diff{u}{t} + u\diff{u}{x} + \frac{1}{\rho}\diff{p}{x} = 0 \label{eq:hifi-mom}\\[1mm]
  &\diff{e}{t} + u\diff{e}{x} + \frac{p}{\rho}\diff{u}{x} = \frac{k}{\rho c_v}\difft{e}{x} + \frac{\nu}{\rho} f(c) \label{eq:hifi-energy}\\[1mm]
  &p = \left( \frac{R}{c_v} \right)\rho e \label{eq:hifi-ideal}
\end{alignat}
\label{eq:ns_hifi}
\end{subequations}
Here, the velocity field $u$, the pressure field $p$, the density $\rho$, and the specific internal energy $e$ are scalar-valued functions of space and time. Moreover, $k, R, \nu$, and $c_v$ denote the thermal diffusivity constant, specific gas constant, the energy yield coefficient, and specific heat constant of the fluid, respectively. The contaminant concentration $c(x,t)$ satisfies the advection-diffusion-reaction PDE,
\begin{equation}
    \diff{c}{t} + u\diff{c}{x} + c\diff{u}{x} = \gamma_d\difft{c}{x} - f(c) \hspace{0.5in}
    \tag{\theequation e}
    \label{eq:contaminant}
\end{equation}
where $\gamma_d > 0$ is a diffusivity constant of the contaminant. The reaction term $f(c)$ represents a chemical reaction resulting in the conversion of the contaminant into energy in \cref{eq:hifi-energy}. The initial and boundary conditions used for the coupled flow--transport model are summarized in \Cref{tab:ns_params}.
\begin{table}[ht]
  \centering
  \renewcommand{\arraystretch}{1.5}
  \begin{tabular}{c|c|c}
    \textbf{Left Boundary (inflow)} & \textbf{Right Boundary (outflow)} & \textbf{Initial Condition} \\ \hline
    $u(0, t) = 1$ & --- & $u(x, 0) = 1 - x/2$ \\ \hline
    --- & $p(1, t) = 2$ & --- \\ \hline
    $\rho(0, t) = 1$ & --- & $\rho(x, 0) = 1$ \\ \hline
    $\diff{e}{x}(0, t) = 0$ & $\diff{e}{x}(1, t) = 0$ & $e(x, 0) = 1 + x$ \\ \hline 
      $\diff{c}{x}(0, t) = 0$ & $\diff{c}{x}(1, t) = 0$ & $c(x, 0) = z(x)$ 
        \end{tabular}
  \caption{Initial and boundary conditions for fluid flow $(0 \leq t \leq T = 0.1)$.}
  \label{tab:ns_params}
\end{table}

We utilize (nondimensionalized) coefficients $\gamma_d = 0.05$, $c_v = 1$, $k = 30$, $R = 1$, $\nu = 10^5$, and reaction function $f(c) = c$ for this test application. Notice that \cref{eq:ns_hifi} fully couples energy, velocity, and the contaminant concentration: the contaminant concentration affects the internal energy of the fluid, which changes its velocity, which in turn affects the evolution of the contaminant. Hence, the mapping $S$ is highly nonlinear and solving the optimization problem \cref{eq:ns_obj} is nontrivial.

We observe, however, that the nonlinearity of the inverse problem hinges on the effects of the contaminant on the velocity field. Alternatively, one can approximate the contaminant concentration by fixing the spatiotemporal velocity profile at a nominal setting and only solving~\eqref{eq:contaminant} to predict the contaminant transport. This approximation, which we denote as $\tilde{S}$, is advantageous in the sense that it significantly lessens the complexity of the governing model and facilitates optimization: the low-fidelity solution operator depends linearly on $z$ and the optimization objective is a quadratic function of $z$. However, such an approximation ignores the effect of concentration on the velocity field and thus induces model error. 

The HDSA-MD framework, incorporated with post-optimal continuation and optimal data acquisition, enables an update of the initial condition estimate with only a few ($\le 5$) queries of the high-fidelity forward model. We apply this framework to the contaminant source identification problem and visualize the results in \Cref{fig:ns_results}. 
 In particular, \Cref{fig:ns_results} (left) displays the high-fidelity optimization objective, evaluated at the updated optimal solution, as we vary over the number of high-fidelity evaluations $N$ (using $p=1$). We observe that even under this complex setting, HDSA-MD manages to reduce the high-fidelity optimization objective by two orders of magnitude compared to the low-fidelity optimizer. Moreover, there is consistent improvement with additional data points. Although it is not easily seen in \Cref{fig:ns_results} due to the axis scaling, the objective function continues to decrease with each additional data point, with an average of 42\% improvement in optimization objective with each step. \Cref{fig:ns_results} (right) showcases the high-fidelity solution, low-fidelity solution, and learned optimal solution $\zbar_5$; a significant improvement over the low-fidelity solution is observed.

\begin{figure}[H]
  \centering
  \includegraphics[width=0.45\textwidth]{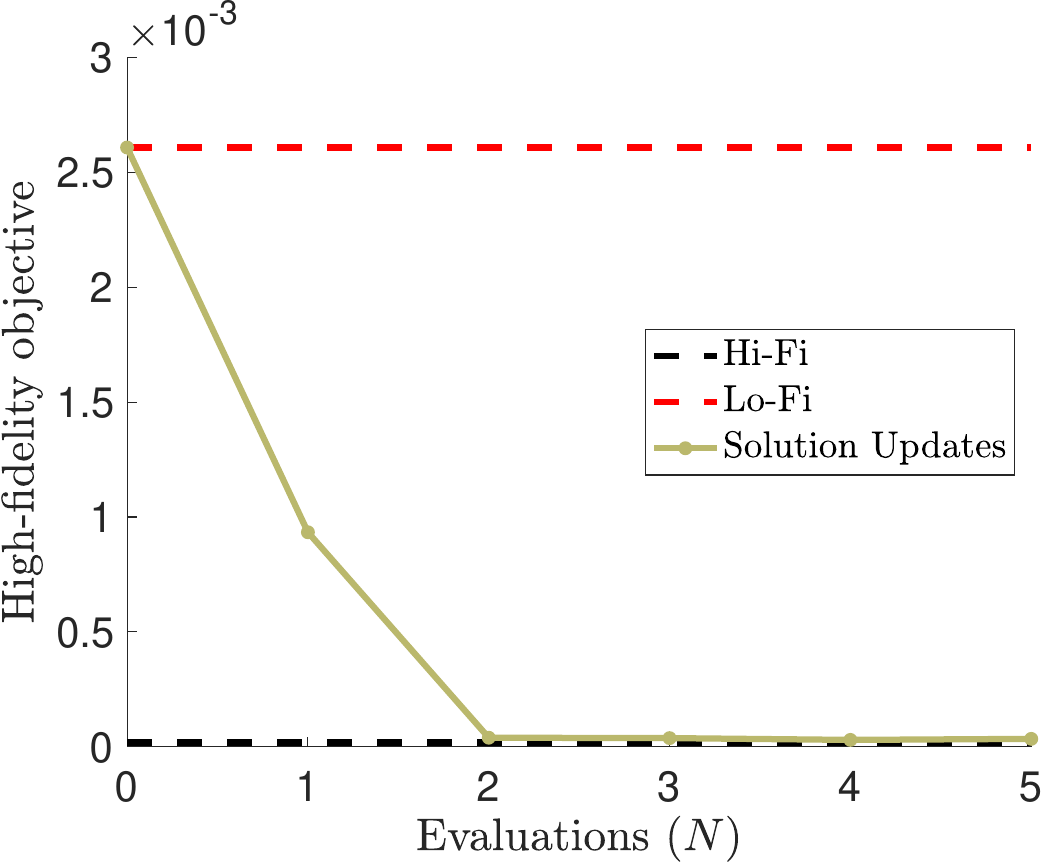}
  \qquad
  \includegraphics[width=0.45\textwidth]{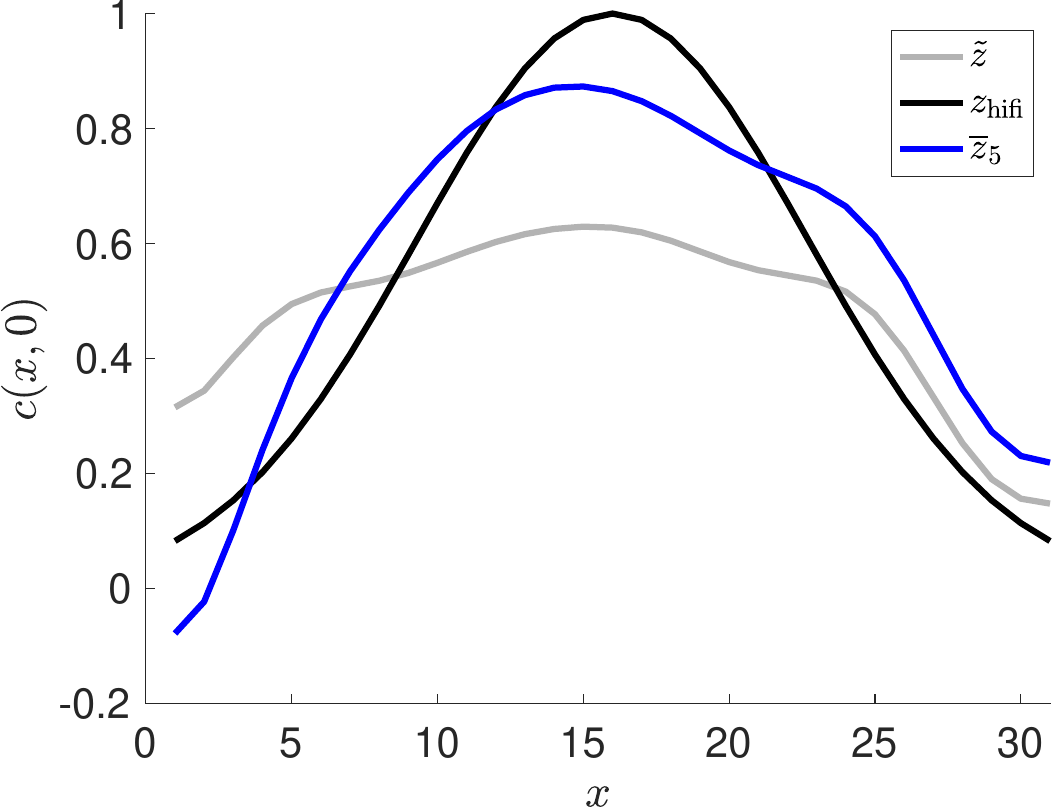}
  \caption{High-fidelity objective over the number of high-fidelity model evaluations (left), and the optimization solutions (right).}
  \label{fig:ns_results}
\end{figure}

\section{Conclusion}
\label{sec:conclusion}
The HDSA-MD framework is a promising approach to learn solutions of large-scale optimization problems, enabling improved optimization solutions through limited high-fidelity model evaluations and Bayesian uncertainty quantification. In this work, we have extended this framework by addressing key limitations associated with efficient data acquisition and linearization errors in optimal solution updates. Guiding the acquisition of high-fidelity model evaluations is especially challenging for model discrepancy calibration due to limited a priori information on the high-fidelity model. Our proposed sequential approach for optimal data acquisition enables targeted uncertainty reduction in discrepancy calibration and improves the expected quality of optimization solutions. Moreover, the utilization of pseudo-time continuation is shown to offer highly-accurate post-optimal propagation of model discrepancy, avoiding the need to resolve PDE-constrained optimization problems when learning optimal solution updates. Our numerical results further demonstrate the efficacy of our approach in a diffusion-reaction problem and coupled contaminant-fluid flow evolution model. The proposed approaches for model calibration is suitable for many other areas in science and engineering where model discrepancy and nonlinearity is prevalent, such as additive manufacturing and fusion energy.

There are several promising directions for further investigation. Our proposed approach depends on a heuristic to specify $\alpha_k$, the exploration/exploitation trade-off hyperparameter. Further work should explore richer theoretical insights to guide adaptation $\alpha_k$. Additionally, our proposed  approach restricts the experimental design and optimal solution update to a subspace defined by the low-fidelity objective function Hessian at the low-fidelity solution. Other approaches based on sample average of Hessians or subspace adaptation in the sequential algorithm may be advantageous. A key limitation with HDSA-MD is that an affine model could be inadequate for sufficient discrepancy calibration in certain applications. One possible approach in this direction involves utilizing heteroscedastic noise models to ingrain the discrepancy calibration with a sense of locality. This would incentivize the model calibration to place heavier emphasis on discrepancy data collected near the optimal solution updates. Not only would this approach better represent nonlinear model discrepancy, it would also enable more robust optimal data acquisition methods, e.g., reducing the need for explicit constraints limiting the design space. Similarly, piece-wise affine models, radial basis functions, and sequential updates to the prior that enforce locality can be considered in a similar vein, although such approaches may pose additional theoretical and computational complexities that need to be addressed. Advancements in efficient posterior uncertainty quantification of optimal solution updates using continuation could also better facilitate \textit{a posteriori} assessment of solution quality. 

\section*{Acknowledgements}
This work was supported by the Laboratory Directed Research and Development program at Sandia National Laboratories, a multimission laboratory managed and operated by National Technology and Engineering Solutions of Sandia LLC, a wholly owned subsidiary of Honeywell International Inc. for the U.S. Department of Energy's National Nuclear Security Administration under contract DE-NA0003525. SAND2026-19369O.


\bibliographystyle{siam}
\bibliography{references}


\appendix\section{Scalable computational methods for OED}
This section details the theoretical results and methods necessary for scalable implementation of optimal experimental design. In \Cref{ssec:app_post_fact}, we derive the necessary theoretical constructs that provide scalable representations of the posterior covariance matrix of $\t$. This is followed by \Cref{ssec:app_fast_OED}, in which we briefly describe the efficient optimization of OED criterion presented in \Cref{ssec:oed_crit}.
\subsection{Expressions for posterior computations}
\label{ssec:app_post_fact}
Recall that the posterior covariance matrix \cref{eq:post_mean_product} is an $n_\theta \times n_\theta$ operator for which explicit computation is typically infeasible. Nevertheless, owing to the Kronecker structure representation of the prior covariance matrix, noise covariance matrix, and forward operator, we may efficiently express and compute with the posterior covariance, $\mat{\Sigma}_\t$. This involves three key steps: (i) factorizing the prior covariance matrix, (ii) factorizing $\A$, and (iii) factorizing and inverting the posterior covariance matrix. The derivations presented in this section are similar to those found in \cite[Appendix B]{HDSA_MD_3}; however, a key distinction is that we utilize a different formulation of the prior precision matrix $\W_\t$. Scalable applications of the posterior covariance matrix are necessary for efficient posterior sampling and uncertainty quantification. In particular, the fast computation of the OED criterion in \Cref{ssec:oed_crit} fundamentally relies on the derivations presented in this section.

\subsubsection*{Factorizing $\W_\u$ and $\W_\t$}
Let $(\vec{x}_j,\lambda_j)$ denote the generalized eigenvalues and eigenvectors of $\W_\u$ in the $\M_\u$ weighted inner product.  Let $\vec{X} = \begin{pmatrix} \vec{x}_1 & \vec{x}_2 & \cdots & \vec{x}_{n_u} \end{pmatrix} \hspace{-1mm}$ and $\vec{\Lambda}$ be the diagonal matrix with entries $\lambda_1,\lambda_2,\dots,\lambda_{n_u}$, so that the generalized eigenvalue decomposition is $\W_\u = \M_\u \vec{X} \vec{\Lambda} \vec{X}^\top \M_\u$. Then we may decompose $\W_\t = \L \L^\top$ where
\begin{align}
\label{eq:L}
\L =
\left(
\begin{array}{cc}
\M_\u \vec{X} \vec{\Lambda}^\frac{1}{2} & \vec{0} \\
\M_\u \vec{X} \vec{\Lambda}^\frac{1}{2}  \otimes \M_\z \ztilde & \M_\u \vec{X} \vec{\Lambda}^\frac{1}{2}  \otimes \W_\z^{\frac{1}{2}} 
\end{array}
\right) .
\end{align}
Furthermore, thanks to its block lower triangular structure, the inverse of $\L$ is
\begin{align}
\label{eq:Linv}
\L^{-1} =
\left(
\begin{array}{cc}
\vec{\Lambda}^{-\frac{1}{2}} \vec{X}^\top & \vec{0} \\
\vec{\Lambda}^{-\frac{1}{2}} \vec{X}^\top   \otimes \left( -\W_\z^{-\frac{1}{2}} \M_\z \ztilde \right) & \vec{\Lambda}^{-\frac{1}{2}} \vec{X}^\top  \otimes  \W_\z^{-\frac{1}{2}}
\end{array}
\right) 
\end{align}
and the inverse of $\W_\t$ may be computed as $\W_\t^{-1} = \L^{-T} \L^{-1}$.
\subsubsection*{Factorizing $\A$}
Let $\A = \vec{\Xi} \vec{\Phi} \vec{\Psi}^\top \W_\t$ be the generalized singular value decomposition (GSVD) of $\A$ with a $\M_\u$-weighted inner product on its row space and a $\W_\t$-weighted inner product on its column space. Here, $\vec{\Phi}$ is the diagonal matrix of singular values, and $\vec{\Xi}$ and $\vec{\Psi}$ are matrices containing the singular vectors which satisfy $\vec{\Xi}^\top \M_\u \vec{\Xi} = \I_{n_u}$ and $\vec{\Psi}^\top \W_\t \vec{\Psi} = \I_{n_\theta}$. To determine the singular vectors, note that 
\begin{align*}
( \I_N \otimes \M_\u) \A \W_\t^{-1} \A^\top ( \I_N \otimes \M_\u)= ( \I_N \otimes \M_\u) \vec{\Xi} \vec{\Phi}^2 \vec{\Xi}^\top( \I_N \otimes \M_\u).
\end{align*}
 Hence, $\vec{\Xi}$ and $\vec{\Phi}$ may be determined from the generalized eigenvalue decomposition of
$( \I_N \otimes \M_\u) \A \W_\t^{-1} \A^\top ( \I_N \otimes \M_\u)$
 in the $(\I_N \otimes \M_\u)$ weighted inner product, or equivalently the eigenvalue decomposition of $\A \W_\t^{-1} \A^\top ( \I_N \otimes \M_\u)$. Using the expressions~\cref{eq:Aell} for $\A$ and~\cref{eq:Linv} for $\W_\t^{-1}=\L^{-T} \L^{-1}$, we can write
\begin{eqnarray*}
 \A \W_\t^{-1} \A^\top ( \I_N \otimes \M_\u) =\vec{G} \otimes  \W_\u^{-1} \M_\u
\end{eqnarray*}
where
\begin{align*}
& \G= \e \e^\top + (\vec{Z} - \ztilde \e^\top)^\top \M_\z\W_\z^{-1}\M_\z (\vec{Z}-\ztilde \e^\top) \in \R^{N \times N},
\end{align*}
$\vec{Z} = \begin{pmatrix} \z_1 & \z_2 & \dots & \z_N \end{pmatrix}$ is the matrix of optimization variable data, and $\e \in \R^{N}$ is the vector of ones. Hence, the left singular vectors $\vec{\Xi}$ correspond to the eigenvectors of $\vec{G} \otimes \W_\u^{-1} \M_\u$ and the squared singular values $\vec{\Phi}^2$ correspond to the eigenvalues.

Let $(\vec{g}_i,\mu_i)$ denote the eigenpairs of $\vec{G}$, and note that $(\vec{x}_j,\frac{1}{\lambda_j})$ are the eigenpairs of $\W_\u^{-1} \M_\u$. Recalling properties of the eigenvalue decomposition of a Kronecker product, we observe that the squared generalized singular values of $\vec{A}$ are given by $\frac{\mu_i}{\lambda_j}$ and are associated with the left singular vectors $\vec{\xi}_{i,j}=\vec{g}_i \otimes \vec{x}_j$. Rewriting the GSVD to solve for $\vec{\Psi}$, the right singular vector associated with $\vec{\xi}_{i,j}$ is given by
 \begin{eqnarray}
 \label{eq:psi_ij}
\vec{\psi}_{i,j}= \frac{1}{\sqrt{\mu_i \lambda_j}} 
\left(
 \begin{array}{cc}
s_i  \vec{x}_j \\
 \vec{x}_j \otimes \W_\z^{-1} \M_\z \vec{y}_i
 \end{array}
 \right), \quad i=1,2,\dots,N, \ \ j=1,2,\dots,n_u,
 \end{eqnarray}
where
\begin{eqnarray}
\label{eq:w_vecs}
\vec{y}_i = \vec{Z} \vec{g}_i - (\e^\top \vec{g}_i) \ztilde \qquad \text{and} \qquad s_i = (\e^\top \vec{g}_i) - \vec{y}_i^\top \M_\z \W_\z^{-1} \M_\z \ztilde .
\end{eqnarray}
Given these decompositions, we express $\vec{\Sigma}_\t^{-1}$ as
\begin{eqnarray}
\label{eq:sigma_inv_factor}
\vec{\Sigma}_\t^{-1} =  \frac{1}{\alpha_{\vec{d}}} \L \C \L^\top, \quad\text{where}\quad
\C =  \alpha_{\vec{d}} \vec{I} +  \L^\top \vec{\Psi} \vec{\Phi}^2 \vec{\Psi}^\top \L .
\end{eqnarray}

\subsubsection*{Inverting $\vec{\Sigma}_\t^{-1}$}
This factorization of $\vec{\Sigma}_\t^{-1}$ implies that $\vec{\Sigma}_\t= \alpha_{\vec{d}} \L^{-T} \C^{-1} \L^{-1}$ and facilitates easy manipulation because $\C$ is a composition of diagonal and orthogonal matrices. Applying the Sherman-Morrison-Woodbury formula to $\C$ in \eqref{eq:sigma_inv_factor}, we have
\begin{eqnarray*}
\C^{-1} = \frac{1}{\alpha_{\vec{d}}} \left( \vec{I} - \L^\top \vec{\Psi} \vec{D}\vec{\Psi}^\top \L \right)
\end{eqnarray*}
where $\vec{D}\in \R^{n_uN \times n_uN}$ is a diagonal matrix whose entries are given by $\frac{\mu_i}{\mu_i + \alpha_{\vec{d}} \lambda_j}.$
Algebraic simplifications yields
\begin{eqnarray}
\label{eq:Sigma}
\vec{\Sigma}_\t = \W_\t^{-1} - \vec{\Psi} \vec{D}\vec{\Psi}^\top .
\end{eqnarray}

\subsection{Fast computation of the OED criterion}
\label{ssec:app_fast_OED}
In this section, we derive the necessary expressions for computation of the criterion. Suppose that all data points are chosen in the projected subspace according to \cref{eq:z_ell_proj}. Denote the design parameters as $\vec{\eta}=( \vec{\eta}^{k+1},\vec{\eta}^{k+2},\dots,\vec{\eta}^{k+p}) \in \R^{pr}$ where $\vec{\eta}^\ell=(\eta_1^\ell,\eta_{2}^\ell,\dots,\eta_r^\ell) \in \R^r$. Observe that this notation enables us to express, 
\[\mat{Z} = \zt \vec{e}^\top + \mat{V} \mat{M}(\vec{\eta}), \quad \text{where} \quad\mat{M}(\vec{\eta}) = \begin{bmatrix}
  \vec{0} & \vec{\eta}^2 & \cdots & \vec{\eta}^{k+p}
\end{bmatrix}.\]
This formulation allows us to minimize the OED criterion over the lower-dimensional space of $\vec{\eta}$. To this end, recall that the posterior distribution of $\t$ is given by $\t \sim \mathcal{N}(\bar{\t}, \vec{\Sigma}_\t)$, where $\vec{\Sigma}_\t = \W_\t^{-1} - \vec{\Psi}\vec{D}\vec{\Psi}^\top$. We express
\begin{eqnarray}
\delta(\z,\t) = \A(\z) \t,\quad\text{where}\quad
  \A(\z) = \left( \begin{array}{cc}\I_{n_u} & \I_{n_u} \otimes \z^\top \M_\z \end{array} \right).
\end{eqnarray}
Therefore, for a given $\z$, the covariance matrix of $\delta(\z,\t)$ with respect to the posterior distribution of $\t$ can be written as $\vec{\Sigma}_\d (\z) = \A(\z)\vec{\Sigma}_\t\A(\z)^\top$. By \cref{eq:Sigma}, we can express
\begin{align*}
  \tr(\vec{\Sigma}_\d (\z) \M_\u) 
  &= \tr(\A(\z)\W_\t^{-1}\A(\z)^\top\M_\u) - \tr(\A(\z)\vec{\Psi}\vec{D}\vec{\Psi}^\top\A(\z)^\top\M_\u)
\end{align*}
Taking expectation over $\z \sim \mu_k$, we obtain
\begin{align*}
  \Psi_k(\vec{\eta})
    &=  \mathbb{E}_{\mu_k}\! \left\{ \tr(\A(\z)\W_\t^{-1}\A(\z)^\top\M_\u) \right\} -  \mathbb{E}_{\mu_k}\! \left\{ \tr(\A(\z)\vec{\Psi}\vec{D}\vec{\Psi}^\top\A(\z)^\top\M_\u) \right\}.
\end{align*}
Since $\W_\t^{-1}$ is design-invariant, minimizing $\Psi_k(\vec{\eta})$ is equivalent to maximizing,
\[\widetilde{\Psi}_k(\vec{\eta}) := \mathbb{E}_{\mu_k} \left\{ \tr(\A(\z)\vec{\Psi}\vec{D}\vec{\Psi}^\top\A(\z)^\top \M_\u) \right\}\]
Substituting~\eqref{eq:psi_ij} into the decomposition, we can expand 
\begin{align*}
  \tr(\A(\z)\vec{\Psi}\vec{D}\vec{\Psi}^\top\A(\z)^\top\M_\u) = \sum_{i=1}^{N}\sum_{j=1}^{n_u} \frac{\x_j^\top \M_\u \x_j}{\lambda_j\left(\mu_i+\alpha_{\vec{d}}\lambda_j\right)}\,
 \Bigl[s_i+ \z^\top\M_\z\W_{\z}^{-1}\M_\z\y_i\Bigr]^2.
\end{align*}
Taking expectation with respect to the measure $\mu_{k}$, we obtain,
\begin{align} \label{eqn:oed_cri}
\widetilde{\Psi}_k(\vec{\eta}) & \coloneq 
\sum_{i=1}^{N}\sum_{j=1}^{n_u} p_i \frac{\x_j^\top \M_\u \x_j}{\lambda_j\left(\mu_i+\alpha_{\vec{d}}\lambda_j\right)} \nonumber \\
& = \sum_{i=1}^{N} p_i \text{Tr}(\M_\u \W_\u^{-1} \M_\u \left( \alpha_{\vec{d}} \W_\u + \mu_i \M_\u \right)^{-1})
,
\end{align}
where 
\[p_i := \left(s_i + \zbar_k^\top\M_\z\W_{\z}^{-1}\M_\z\y_i\right)^2 + \aoed \ \y_i^\top(\M_{\z}\W_{\z}^{-1}\M_{\z})\W_{\z}^{-1}(\M_{\z}\W_{\z}^{-1}\M_{\z})\y_i.\]
Note that the criterion depends on the design-dependent quantities $\y_i$, $s_i$, and $\mu_i$ for $1 \leq i \leq N$. On the other hand, $\x_j^\top \M_\u \x_j$ and $\lambda_j$ for $1 \leq j \leq n_u$ are design-independent and can be evaluated in an offline manner. Similarly, the computation of $p_i$ can be accelerated by precomputing the application of $(\M_\z\W_\z^{-1}\M_\z)$ to $\mat{V}$, owing to the structure of $\{\z_{\ell}\}_{\ell=1}^N$ as shown in \cref{eq:z_ell_proj}.

In addition to the OED criterion, we also require fast evaluations of its gradient to enable efficient optimization for the design. Note that the criterion specifically depends on the eigenpairs of $\G$, $\{(\mu_i,\vec{g}_i)\}_{i=1}^N$, as $\mu_i$ appears in~\Cref{eqn:oed_cri} and $\y_i$ and $s_i$ depend on $\vec{g}_i$. Therefore, evaluating $\nabla_{\vec{\eta}} \widetilde{\Psi}_k(\vec{\eta})$ requires computation of the derivatives of the eigenpairs of $\G$, $(\mu_i,\vec{g}_i)$, with respect to $\vec{\eta}$. For the following expressions, we assume that the eigenvalues are simple (i.e., distinct) and use the convention of normalized eigenvectors $\{\vec{g}_i\}_{i=1}^n$ with the first nonzero element positive in each vector (to prevent sign flipping). Following~\cite{Abou-Moustafa}, we have 
\begin{align*}
\frac{\partial \mu_i}{\partial \eta_k^\ell}= \g_i^\top \left( \frac{\partial \G}{{\partial \eta_k^\ell}} \right) \g_i \qquad \text{and} \qquad \frac{\partial \g_i}{\partial \eta_k^\ell}= (\mu_i \I - \G)^\dagger \left(\frac{\partial \G}{\partial \eta_k^\ell}\right) \g_i .
\end{align*}
Given the expression $ \G = \e \e^\top + \vec{M}(\vec{\eta})^\top (\V^\top\M_\z\W_\z^{-1}\M_\z \V) \vec{M}(\vec{\eta})$, we express
\begin{align*}
\frac{\partial \G}{\partial \eta_k^\ell} =  \frac{\partial \vec{M}}{\partial \eta_k^\ell}(\vec{\eta})^\top (\V^\top\M_\z\W_\z^{-1}\M_\z \V) \vec{M}(\vec{\eta}) +  \vec{M}(\vec{\eta})^\top (\V^\top\M_\z\W_\z^{-1}\M_\z \V)\frac{\partial \vec{M}}{\partial \eta_k^\ell}(\vec{\eta})
\end{align*}
where $\frac{\partial \vec{M}}{\partial \eta_k^\ell}(\vec{\eta})$ is the matrix with zero in every entry except $(k,\ell)$ which has the entry one. Therefore, we may compute the gradient of $\mu_i$ and Jacobian of $\g_i$, for $i=1,2,\dots,N$, using matrix products in dimension $N$.

\end{document}